\newcommand{\R}{{\mathbb{R}}}
\newcommand{\mC}{{\mathbb{C}}}
\newcommand{\g}{{\mathfrak g}}
\newcommand{\h}{{\mathfrak h}}
\newcommand{\s}{{\mathfrak s}}
\newcommand{\fa}{{\mathfrak a}}
\newcommand{\fc}{{\mathfrak c}}
\newcommand{\ff}{{\mathfrak f}}
\newcommand{\fg}{{\mathfrak g}}
\newcommand{\fh}{{\mathfrak h}}
\newcommand{\fgl}{{\mathfrak g}{\mathfrak l}}
\newcommand{\fsl}{{\mathfrak s}{\mathfrak l}}
\newcommand{\fso}{{\mathfrak s}{\mathfrak o}}
\newcommand{\fsp}{{\mathfrak s}{\mathfrak p}}
\newcommand{\Sp}{{\operatorname{Sp}}}
\newcommand{\GL}{{\operatorname{GL}}}
\newcommand{\SL}{{\operatorname{SL}}}
\newcommand{\SO}{{\operatorname{SO}}}
\newcommand{\spin}{\operatorname{spin}}
\newcommand{\ad}{{\mathrm a}{\mathrm d\,}}
\newcommand{\diag}{{\mathrm d}{\mathrm i}{\mathrm a}{\mathrm g\,}}
\newcommand{\Span}{{\mathrm S}{\mathrm p}{\mathrm a}{\mathrm n\,}}
\newcommand{\End}{{\mathrm E}{\mathrm n}{\mathrm d\,}}
\newcommand{\Ker}{{\mathrm K}{\mathrm e}{\mathrm r\,}}
\newcommand{\rank}{{\mathrm r}{\mathrm a}{\mathrm n}{\mathrm k\,}}
\newcommand{\Max}{{\mathrm m}{\mathrm a}{\mathrm x\,}}
\newcommand{\tr}{{\mathrm T}{\mathrm r\,}}
\newcommand{\PV}{{ \mathbf P}{\mathbf V}}
\newcommand{\B}{{\operatorname{B}}}
\newcommand{\E}{{\operatorname{E}}}
\newcommand{\F}{{\operatorname{F}}}
\newtheorem{thm}{Theorem}[section]
\newtheorem{defn}[thm]{Definition}
\newtheorem{prop}[thm]{Proposition}
\newtheorem{lem}[thm]{Lemma}
\newtheorem{rem}[thm]{Remark}
\newtheorem{cor}[thm]{Corollary}
\newtheorem{ex}[thm]{Example}
\begin{document}
\title[\'{E}tale prehomogeneous vector spaces]{Flat Lie groups, Frobenius Lie algebras and \'{e}tale prehomogeneous vector spaces for reductive Lie groups}
\author{Xiaomei Yang}
\address[Yang]{School of Mathematics, Sichuan Normal University, Chengdu, Sichuan 610068, P.~R.~China}
\email{xiaomeiyang@mail.nankai.edu.cn}
\author{Fuhai Zhu}
\address[Zhu]{Department of Mathematics, Nanjing University, Nanjing, P.~R.~China}
\email{zhufuhai@nju.edu.cn}
\date{}
\tableofcontents
\begin{abstract}
In this paper, we established the relationship among left-invariant flat connections on Lie groups, left-symmetric algebras, Frobenius Lie algebras and \'{e}tale prehomogeneous vector spaces, gave a one-to-one correspondence between the left-symmetric Lie algebras with a right identity and the \'{e}tale prehomogeneous vector spaces for a Lie group, and proved that, in essence, any left-symmetric structure on a reductive Lie algebra has a right identity, which implies that the classification of flat connections on a reductive Lie group $G$ amounts to that of \'{e}tale prehomogeneous vector spaces for $G$. We classified the \'{e}tale prehomogeneous vector spaces for $G$ with simple Levi factors.
\end{abstract}
\maketitle

\medskip
{\bf Keywords}: Left-invariant connection, Left-symmetric algebra, Frobenius Lie algebra, Prehomogeneous vector space, Symplectic Lie algebra.

%%%%%%%%%%%%%%%%%%%%%%%%%%%%%%%%%%%%%%%%%%%%%%%%%%%%%%%%%%%%%%%%%%%%%%%%%%%%%%%%%%%%%%%%%%%%%
\section{Introduction}\label{section1}

\subsection{Flat Lie Groups} Let $G$ be a Lie group with Lie algebra $\g$ and let $\Gamma(TG)$ denote the set of all
differentiable vector fields on $G$. A $G$-invariant affine connection $\nabla$
is called \textbf{torsion-free} %, or {\it symmetric}
if, for any $x,y\in\Gamma(TG)$,
\begin{equation}\label{torsionfree}
\nabla_xy-\nabla_yx-[x,y]=0,
\end{equation}
and  \textbf{flat}
if
\begin{equation}\label{flat}
\nabla_x\nabla_y-\nabla_y\nabla_x-\nabla_{[x,y]}=0.
\end{equation}
Lie groups with flat torsion-free left-invariant affine connections are called flat Lie groups.

\subsection{Flat Lie groups and Left-symmetric Algebras} Similar to the relation between Lie groups and Lie algebras, the flat connections on Lie groups correspond to special structures on there Lie algebras. To be precise, an affine connection on a Lie group $G$ determines a covariant differentiation:
$$\nabla_x:\Gamma(TG)\rightarrow\Gamma(TG),\ \  y\mapsto\nabla_xy, \ \ x,y\in\Gamma(TG).$$
We identify the Lie algebra $\g$ of $G$ with the set of left-invariant vector fields. Since $\nabla$ is $G$-invariant, for any $x,y\in\g$, $\nabla_xy$ is also in $\g$. If we put
\begin{equation*}
x\cdot y=xy=\nabla_xy,
\end{equation*}
then we obtain an $\R$-bilinear product on $\g$. The
vanishing of curvature and torsion, i.e., (\ref{torsionfree}) and
(\ref{flat}), are equivalent to the following identities:
\begin{eqnarray}
[x,y]&=&xy-yx,\nonumber\\
(xy)z-x(yz)&=& (yx)z-y(xz).\label{eq-lsa}%\pause
\end{eqnarray}
A vector space $\g$ over a field $\mathbb{F}$ with a bilinear
product $xy$ is called a \textbf{left-symmetric
algebra} (abbrev. LSA) if Equation~(\ref{eq-lsa}) holds. Equation~(\ref{eq-lsa}) implies that the commutators
\[[x, y]=x y-y x\]
satisfy the Jacobi identity. Accordingly, each left-symmetric
product has an adjacent Lie algebra. Note that the left-multiplication $L_x(y)=xy$ defines a Lie algebra representation:
$$L:\g\rightarrow\End(\g),\ \ \ x\mapsto L_x,$$
which is called the \textbf{left-regular representation} of $\g$.
It is easy to see that the classification of real/complex flat Lie groups is equivalent to that of real/complex left-symmetric algebras.

Left-symmetric algebras are a class of nonassociative algebras, which have close relation with many important fields in mathematics and mathematical physics. But their classification is very complicated due to the nonassociativity. During the past several decades, a great deal of effort has been devoted to classify left-symmetric algebras. Since there are no left-symmetric algebras on a finite-dimensional semisimple Lie algebra $\fg$ of characteristic $0$, many people studied the left-symmetric algebras for some low-dimensional nilpotent Lie algebras (see \cite{Bur1996-2,Kim1986,Sch1974}) or some special cases in certain higher dimensions (see \cite{Bur1998}). For reductive cases, Burde gave a classification of left-symmetric algebras for $\fgl(2)$ in \cite{Bur1996} and Baues investigated the left-symmetric structures for $\fgl(n)$ in \cite{Bau1999}.

\subsection{LSAs and Symplectic Lie Algebras} The classification of flat Lie groups and left-symmetric algebras is far from complete. Milnor initiated a systematical study for flat Lie groups with (pseudo-)Riemannian metric in \cite{Mil1976}, while B. Y. Chu \cite{Chu1974} found flat connections for Lie groups with left-invariant symplectic structures, which is the main ingredient in this paper.

If a Lie group $G$ is endowed with a left-invariant symplectic form $\omega$, then $\omega$ is also a nondegenerate closed $2$-form on $\fg$, in other words, $\omega$ is a nondegenerate skew-symmetric bilinear form with the following property:
\[d\omega(x,y,z)=\omega([x,y],z)+\omega([y,z],x)+\omega([z,x],y)=0.\]
Such Lie algebra $\fg$ is called a \textbf{symplectic Lie algebra}. Then
\[\omega(x, y\ast z)=\omega([x, y], z), \ \ \ \forall x, y,z\in\mathfrak{g}\]
defines a compatible left-symmetric product $\ast$ on $\mathfrak{g}$ (\cite[P.~154, Theorem 6]{Chu1974}.

For those Lie algebras without symplectic forms, we have the following result.

\begin{thm}[Theorem~\ref{thm-main1}]
  Let $(\g,\ast)$ be a left-symmetric algebra and let $(L^*,\g^*)$ be dual representation of the left-regular representation $(L,\g)$ of $\g$. Then $\g\dot+\g^*$ is a symplectic Lie algebra.
\end{thm}

As a corollary (Proposition~\ref{prop-perfect}) we give a simpler proof of the classical result of Helmstetter~\cite{Hel1971}: every perfect Lie algebra (i.e., $[\g,\g]=\g$) has no left-symmetric structure.

\subsection{Frobenius Lie Algebras and LSAs with Right Identities} For many Lie algebras, such as semisimple Lie algebras, the second cohomology with trivial coefficient is trivial. Therefore, the closed $2$-form $\omega$ is exact, i.e., $$\omega=df,\ \ \ \mbox{for some }f\in \fg^{\ast}.$$
A Lie algebra $\fg$ with a nondegenerate exact $2$-form $df$ is called \textbf{Frobenius}, which was introduced by Ooms (\cite{Oom1974,Oom1976,Oom1980}). Such a Lie algebra is interesting because whose index is zero. Here, by definition, the index of a Lie algebra is the minimum of the codimensions of coadjoint orbits. If the index of $\g$ is zero, then there is an open dense coadjoint orbit in $\g^*$. It is well-known that the index of a semisimple Lie algebra $\g$ is the rank of $\g$. Therefore, semisimple Lie algebras are not Frobenius, but many Parabolic subalgebras are Frobenius (see~\cite{DK2000,Ela1982,GG2008,Jos2006,Kos2012,TY2005} and reference therein). Then we have

\begin{thm}[Theorem~\ref{thm3.6}]
The left-symmetric algebra $(\g,\ast)$ has a right identity if
and only if $\fg\dot+\fg^{\ast}$ is a Frobenius Lie algebra with the natural symplectic form.
\end{thm}

Since every semisimple Lie algebra (which is perfect) has no left-symmetric structure, in this paper we will focus on reductive Lie algebras. With the help of symplectic forms, we have

\begin{thm}[Theorem~\ref{thm4.1}]
Let $\g$ be a reductive Lie algebra with a left-symmetric structure. Then $\g$ is decomposed into the direct sum of $2$ ideals: $\g=\g_1\oplus C$, where $C$ is an abelian ideal and $\g_1$ is a left-symmetric algebra with a right identity.
\end{thm}

\subsection{\'{E}tale Prehomogeneous Vector Spaces and LSAs with Right Identities} If a left-symmetric algebra $\g$ has a right identity $e$, then for the left-regular representation $(L,\g)$, the isotropy subalgebra at $e$ is trivial:
$$\g_e=\{x\in\g|L_xe=0\}.$$
Therefore $(\g,L,\g)$ is an \'{e}tale prehomogeneous vector space (see Definition~\ref{Def-PV}).

Around 1960, M. Sato introduced the notion of prehomogeneous vector spaces (abbrev. $\PV$s) and applied it to the study of Zeta function in \cite{SS1974}.
In the 1990s, Wright and Yukie showed that the theory of prehomogeneous vector spaces is
related to distributions of arithmetic objects(see \cite{WY1992}). As an application of the method, Kable and Yukie presented an analogous correspondence for some class of prehomogeneous vector spaces in \cite{KY1997,Yuk1997}.
Moreover, Yukie considered problems analogous to the Oppenheim conjecture from the viewpoint of prehomogeneous vector spaces in the series papers \cite{WYZ,Yuk1,Yuk3}.

A prehomogeneous vector space is defined as a finite-dimensional vector space $V$ together with a rational action of an algebraic group $G$ such that $G$ has an open dense orbit in $V$. We generalized the definition to representations of Lie groups and Lie algebras. As we mentioned above, the classification of flat connections on reductive Lie groups or left-symmetric structures on reductive Lie algebras amounts to that of \'{e}tale prehomogeneous vector spaces for reductive Lie groups or Lie algebras. For complex reductive Lie algebras with simple Levi factors we have the following

\begin{thm}[Theorems~\ref{thm-sc1},~\ref{s+c^{k}}]
Let $(\g_{s}\oplus \fgl(1)^{k},\sum_{i}\sigma_{i}\otimes\mu_{i},\sum_{i}V_{i}\otimes V'_{i})$ be an \'{e}tale $\PV$, where $\g_{s}$ is simple. Then it is equivalent to one of the following triplets:

 $(1) \ (\fsl(n)\oplus\fgl(1),\Lambda_{1}\otimes\mu_{1},V(n)\otimes V(n))\ (\tr(\mu_{1}(\fgl(1)))\not\equiv 0);$

$(2) \ (\fsl(2)\oplus\fgl(1),3\Lambda_{1}\otimes\Box,V(4));$

$(3) \ (\fsl(n)\oplus\fgl(1)^{n+1},\Lambda_{1}^{(\ast)}\otimes\mu_{1},V(n)\otimes V(n+1)),$ \ (for $\mu_{1},$ see Lemma~\ref{lemCij})$;$

$(4) \ (\fgl(n)\oplus\fgl(1)^{mn-n^2},\Lambda_{1}^{(\ast)}\otimes\mu_{1},V(n)\otimes V(m)),(m-n>1)$ \ (for $\mu_{1},$ see Lemma~\ref{abelian2})$;$

$(5) \ (\fsl(n)\oplus\fgl(1)^{n+1},\Lambda_{1}\otimes\mu_{1}+\Lambda_{1}^{\ast}\otimes\Box,V(n)+V(n)\otimes V(n)),$ \ (for $\mu_{1},$ see Lemma~\ref{SL11})$;$

$(6) \ (\fsl(2)\oplus\fgl(1)^{2},2\Lambda_1\otimes\Box+\Lambda_1\otimes\Box,V(3)+ V(2)).$

\end{thm}

The notation we used is standard. Let $\g$ be a complex simple Lie algebra of rank $n$ with a Cartan subalgebra $\fh$ and let $\Pi$ be the set of  simple roots of $\Delta(\g,\fh)$. Identify the real vector space spanned by $\Pi$ with $\R^n$. Let $\{\lambda_{1},\ldots,
\lambda_{n}\}$ be the standard orthonormal basis of $\R^n$. We write $\Lambda_k$ for the $k^{th}$ fundamental highest weight in the Bourbaki ordering of the simple roots.  For $\operatorname{A}_n = \fsl(n+1)$, $\Lambda_k = \lambda_1+\dots+\lambda_k$, but for the other classical
Lie algebras this is the case only for $k \leq n-2 \ (\operatorname{D}_n)$, $k \leq n-1
\ (\operatorname{B}_n, \ \operatorname{C}_n)$. For the exceptional simple Lie algebras it is a
little more complicated, but $\Lambda_1$ is the highest weight of
the representation of degree $26$ for $\operatorname{F}_4$, $27$ for $\operatorname{E}_6$, $56$ for $\operatorname{E}_7$ and $248$ for $\operatorname{E}_8$ and $\Lambda_2=\lambda_1+\lambda_2$ for $\operatorname{G}_2$. We denote by $V(n)$ the $n$-dimensional vector space over $\mC$ and by $M(n,m)$ the set of all $n\times m$ matrices. Sometimes we write $(\GL(n),m\Lambda_{k})$ instead of $(\GL(1)\times\SL(n),\Box\otimes m\Lambda_{k})$, where $\Box$ is the standard 1-dimensional representation of $\GL(1)$.

%Cuspidal prehomogeneous vector spaces have close relation with left-invariant flat torsion-free connections and the classifications of left-symmetric algebras.

\subsection{Outline of The Paper} The paper is organized as follows. In Section~\ref{Relations} we established the relationship between left-symmetric algebras and symplectic Lie algebras, especially the relationship between Frobenius Lie algebras and left-symmetric algebras with a right identity, which was used in Section~\ref{lsa-reductive Lie algebra} to prove that any left-symmetric algebra with a reductive adjacent Lie algebra is a direct sum of an abelian ideal and an ideal with a right identity. In Section~\ref{Prehomogeneous}, we introduced the definition and some properties of prehomogeneous vector spaces, and proved that the classification of left-symmetric structures on reductive Lie algebras amounts to that of \'{e}tale prehomogeneous vector spaces for reductive Lie groups. Furthermore, we gave some results which provide some efficient and simple criteria to check the prehomogeneity for some complicated triplets, and classified \'{e}tale $\PV$s for reductive Lie groups with simple Levi factors in Sections~\ref{Cuspidal PV}, \ref{sec6}.

The relationship among flat Lie groups, left-symmetric algebras, symplectic Lie algebras, Frobenius Lie algebras and
prehomogeneous vector spaces may be illustrated
as follows:

\begin{center}\setlength{\unitlength}{1.1mm}
\begin{picture}(120,60)
\put(-5,25){\framebox(25,15){$\begin{array}{cc}\text{Flat Lie groups}\end{array}$}}
\put(20,40){\vector(2,1){10}}
\put(30,45){\vector(-2,-1){10}}
\put(30,45){\framebox(20,10){\text{LSAs}}}
\put(20,25){\vector(2,-1){10}}
\put(30,20){\vector(-2,1){10}}
\put(40,45){\vector(0,-1){25}}
\put(40,20){\vector(0,1){25}}
\put(30,10){\framebox(20,10){$\begin{array}{cc}\text{Symplectic}\\ \text{Lie algebras}\end{array}$}}
\put(60,50){\vector(-1,0){10}}
\put(60,15){\vector(-1,0){10}}
\put(60,45){\framebox(20,10){$\begin{array}{cc}\text{LSAs with}\\ \text{right identities}\end{array}$}}
\put(70,45){\vector(0,-1){25}}
\put(70,25){\vector(0,1){20}}
\put(60,10){\framebox(20,10){$\begin{array}{cc}\text{Frobenius}\\ \text{Lie algebras}\end{array}$}}
\put(80,20){\vector(2,1){10}}
\put(90,25){\vector(-2,-1){10}}
\put(90,25){\framebox(20,15){\'{E}tale $\PV$s}}
\put(80,45){\vector(2,-1){10}}
\put(90,40){\vector(-2,1){10}}
\end{picture}
\end{center}

\section{Left-symmetric algebras, symplectic and Frobenius Lie algebras}\label{Relations}

\subsection{LSAs and symplectic Lie algebras}\label{section3.1}\

As mentioned in the introduction, the classification of flat Lie groups is equivalent to that of real left-symmetric algebras. By definition, a left-symmetric algebra is a vector space over a field $\mathbb{F}$ with a bilinear product $xy$ such that
$$(xy)z-x(yz)= (yx)z-y(xz).$$

In \cite{Chu1974}, B. Y. Chu introduced a method to construct LSAs from symplectic Lie algebras. To be precise, let $(\mathfrak{g}, \omega)$ be a symplectic Lie algebra. Then there exists a compatible left-symmetric product $\ast$ on
$\mathfrak{g}$ given by
\[\omega(x, y\ast z)=\omega([x, y], z), \ \ \ \forall x, y,z\in\mathfrak{g}.\]
It is well-known that every symplectic Lie algebra is even-dimensional, while there are odd-dimensional left-symmetric algebras, and an even-dimensional left-symmetric algebra may not be endowed with a symplectic form. The following result gives a method to construct symplectic Lie algebras from LSAs.

\begin{thm}\label{thm-main1}
  Let $(\g,\ast)$ be a left-symmetric algebra and let $(L^*,\g^*)$ be dual representation of the left-regular representation $(L,\g)$ of $\g$. Then there exists a natural Lie algebra structure on $\g\dot+\g^{\ast}$ such that $\g$ is a subalgebra, $\g^*$ is an abelian ideal, and
  \begin{equation}\label{eq-ad-L}
  [x,u]=L^*(x)(u)=-u\circ L_x,\ \ \ \forall x\in\g, u\in\g^*.
  \end{equation}
Furthermore, the natural skew-symmetric bilinear form on $\g\dot+\g^\ast$ defined by
$$\omega(x+u,y+v)=v(x)-u(y)$$
is a nondegerate closed $2$-form on $\g\dot+\g^*$, i.e., $\g\dot+\g^*$ is a symplectic Lie algebra.
  \end{thm}
\begin{proof} The first assertion is easy. We need to check that $\omega$ is closed.
Since $\omega(\fg,\fg)=\omega(\fg^{\ast},\fg^{\ast})=0$ and $\g^*$ is abelian, we need only consider
$$\omega([x,y],v)+\omega([y,v],x)+\omega([v,x],y)=0,$$
for any $x,y\in\g, v\in\g^{\ast}$, which
follows from the following:
\begin{eqnarray*}
\omega([x,y],v)+\omega([y,v],x)+\omega([v,x],y)
&=&v([x,y])-[y,v](x)-[v,x](y)\\
&=&v([x,y])+v(y\ast x)-v(x\ast y)\\
&=&0.
\end{eqnarray*}
Therefore, $\g\dot+\g^{\ast}$ is a symplectic Lie algebra.
\end{proof}

Now we give a new and simpler proof to the following well-known result of Helmstetter~\cite{Hel1971}.

\begin{prop}\label{prop-perfect}
Let $(\fg,\ast)$ be a left-symmetric algebra. Then $[\fg,\fg]\neq \fg.$
\end{prop}
\begin{proof}
Assume that $[\fg,\fg]=\fg$. Then there exist some $y_{i},z_{i}\in \fg$ such that $x=\sum\limits_{i}[y_{i},z_{i}]$ for any $x\in \fg$, it follows that
\[\tr(\ad_\g x)=\tr(\ad_\g \sum_{i}[y_{i},z_{i}])=0.\]
Consider the symplectic Lie algebra $\g\dot+\g^*$. By (\ref{eq-ad-L}), we have
\[\ad x(u)=L^{\ast}_{x}(u),\ \ \ \forall x\in\g, u\in\g^{\ast},\]
following which we have
\[\tr(\ad x|_{\g^{\ast}})=\tr (L^{\ast}_x)=\tr (L_{x})=\tr (L_{\sum\limits_{i}[y_{i},z_{i}]})=0.\]
Therefore, $\tr(\ad x)=0$. It is easy to see that $\tr(\ad u)=0$, for any $u\in\g^*$, which implies that $\g\dot+\g^*$ is a unimodular Lie algebra. By a result of Hano~\cite{Hano1957}, every unimodular symplectic Lie algebra is solvable. Therefore, $\g\dot+\g^*$ and $\g$ are solvable Lie algebras. A contradiction.
\end{proof}

\subsection{Frobenius Lie algebras and LSAs with right identities}\label{section3.2}\

\iffalse
\begin{rem}
Let $(\g,\rho,V)$ be an \'{e}tale $\PV$ and $v$ a generic point. Then we can define a map
$\pi: \g\rightarrow V$ by $\pi(g)=\rho(g)v$. It is easy to see that $\pi$ is a bijection and that $\g\dot+V^{\ast}$ is a Frobenius Lie algebra.
\end{rem}
%It is easy to check that  $$[x,v]=x\ast v-v\ast x,\ \ \ \omega([x,v],y)=\omega(x,v\ast y).$$
\fi

Since Frobenius Lie algebras are special symplectic Lie algebras, it is natural to investigate the left-symmetric algebras constructed from Frobenius Lie algebras. Such left-symmetric algebras have a very special property.

\begin{prop}\label{prop3.5}
Let $(\g,\omega)$ be a symplectic Lie algebra. Then $(\fg,\omega)$ is a Frobenius Lie algebra if and only if the corresponding left-symmetric algebra has a right identity.
\end{prop}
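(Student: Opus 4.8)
The plan is to unravel the correspondence from Theorem~\ref{thm3.1} explicitly and translate the algebraic condition ``$\omega$ is exact'' into ``$(\fg,\ast)$ has a right identity''. Recall that the left-symmetric product is defined by $\omega(x,y\ast z)=\omega([x,y],z)$ for all $x,y,z\in\fg$, and that $\omega$ being exact means $\omega=df$ for some $f\in\fg^\ast$, i.e.\ $\omega(x,y)=-f([x,y])$ for all $x,y\in\fg$. First I would fix such an $f$ and, using the non-degeneracy of $\omega$, produce the unique element $e\in\fg$ with $\omega(x,e)=f(x)$ for all $x\in\fg$. The natural guess is that $e$ is the sought-after right identity.

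Next I would verify that $x\ast e=x$ for every $x\in\fg$. By non-degeneracy of $\omega$ it suffices to check $\omega(y,x\ast e)=\omega(y,x)$ for all $y\in\fg$. Using the defining relation of $\ast$, $\omega(y,x\ast e)=\omega([y,x],e)=f([y,x])=-\omega(y,x)\cdot(-1)$; here I must be careful with the sign conventions in $\omega=df$ versus the paper's convention $d\omega(x,y,z)=\omega([x,y],z)+\omega([y,z],x)+\omega([z,x],y)$, so the precise identity is $\omega([y,x],e)=f([y,x])$, and combining with $\omega(y,x)=-f([x,y])=f([y,x])$ gives $\omega(y,x\ast e)=\omega(y,x)$ as desired. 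Hence $e$ is a right identity for $(\fg,\ast)$. This settles the ``only if'' direction: a Frobenius Lie algebra yields an LSA with right identity.

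For the converse, suppose $(\fg,\ast)$ — the left-symmetric structure attached to the symplectic Lie algebra $(\fg,\omega)$ by Theorem~\ref{thm3.1} — has a right identity $e$, so $x\ast e=x$ for all $x$. I would then define $f\in\fg^\ast$ by $f(x)=\omega(x,e)$ and check that $\omega=df$, i.e.\ $\omega(x,y)=-f([x,y])$ (or $f([y,x])$, matching the sign fixed above) for all $x,y\in\fg$. Indeed $f([y,x])=\omega([y,x],e)=\omega(y,x\ast e)=\omega(y,x)$, using the definition of $\ast$ and then the right-identity property. Thus $\omega$ is exact and $(\fg,\omega)$ is Frobenius.

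The only genuinely delicate point is bookkeeping the sign/order conventions consistently: whether ``$\omega=df$'' is interpreted with $df(x,y)=-f([x,y])$ or $f([x,y])$, and matching this against the convention used in the definition of the product ``$\ast$''. Once that is pinned down, both directions are a one-line application of non-degeneracy plus the defining identity $\omega(x,y\ast z)=\omega([x,y],z)$; no deeper structural input is needed. I would also remark that the element $e$ produced is characterized by $\iota_e\omega = f$, which makes transparent why right identities correspond bijectively to exact symplectic forms (different choices of $f$ with $df=\omega$ differ by a closed $1$-form, i.e.\ by an element vanishing on $[\fg,\fg]$, which is exactly the ambiguity in the right identity).
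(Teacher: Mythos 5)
Your proof takes essentially the same route as the paper's: use non-degeneracy of $\omega$ to produce the element $e$ dual to $f$ (the paper's $-x_f$), verify $x\ast e=x$ from $\omega(y,x\ast e)=\omega([y,x],e)$, and run the same one-line computation in reverse for the converse. The only wrinkle is that under the convention you announce at the outset, $\omega(x,y)=-f([x,y])$, the computation actually yields $x\ast e=-x$ (so the right identity is $-e$), whereas the paper adopts $df(x,y)=f([x,y])$ and gets $e$ on the nose — a sign discrepancy you yourself flag and which does not affect the validity of either direction.
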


\begin{proof}
Let $\fg$ be a Frobenius Lie algebra. Then
$\omega=df$ for some $f\in \fg^{\ast}$, which implies that
there exists a unique element $x_{f}\in \fg$
such that $\omega(x_{f},z)=f(z)$ for any $z\in \fg$. Hence we have
\[\omega(x,y)=f([x,y])=\omega(x_{f},[x,y])=-\omega(x,y\ast x_{f}),\]
for all $x,y\in \fg$. Therefore $y=-y\ast x_{f}$ and $-x_{f}$
is a right identity.

Now let $(\g,\ast)$ be a left-symmetric algebra with a right identity $e\in \fg$ and let $f(z)=\omega(z,e)$ for any $z\in \fg$. Then we have
\[\omega(x,y)=\omega(x,y\ast e)=\omega([x,y],e)=f([x,y])=df(x,y),\]
for any $x,y\in\g$. Thus $\omega=df$ and $\fg$ is a Frobenius Lie algebra.
\end{proof}

Combining the above Proposition with Theorem~\ref{thm-main1}, we have

\begin{thm}\label{thm3.6}
The left-symmetric algebra $(\g,\ast)$ has a right identity if
and only if $\fg\dot+\fg^{\ast}$ is a Frobenius Lie algebra.
\end{thm}

\begin{proof}
Let $\g$ be a left-symmetric algebra with a right identity $e$ and let $\omega$ be the natural symplectic form on $\g\dot+\g^*$. Then for any $x\in\g,v\in\g^*$, we have
$$\omega(x,v)=\omega(x*e,v)=\omega(x,v*e),$$
from which we deduce that $e$ is a right identity of $\g\dot+\g^*$. Thus $\g\dot+\g^*$ is a Frobenius Lie algebra.

Conversely, if $\g\dot+\g^*$ is a Frobenius Lie algebra, then, by Proposition~\ref{prop3.5}, we see that
$\g\dot+\g^{\ast}$ has a left-symmetric product with a right identity $e+v_0$, where $e\in\g,v_0\in\g^*$. For any $x\in\g$, we have
$$x=x*(e+v_0)=x*e+x*v_0,$$
following which we have $x*e=x$, i.e., $e$ is a right identity of $\g$.
\end{proof}

\section{Left-symmetric structures on reductive Lie algebras}\label{lsa-reductive Lie algebra}

From now on we focus on left-symmetric structures on reductive Lie algebras. In general, a left-symmetric algebra may not have a right identity. But, in essence, any left-symmetric structure on a reductive Lie algebra has a right identity. More precisely, we have %are not necessarily defined by \'{e}tale prehomogeneous vector spaces. However, we will prove in this section that left-symmetric structures on reductive Lie algebras correspond essentially to prehomogeneous vector spaces. More precisely, we have

\begin{thm}\label{thm4.1}
Let $\fg=\s\oplus \fc$ be a reductive Lie algebra with $\s=[\fg,\fg]$ and $\fc$ the center of $\fg$. If $\g$ has a compatible left-symmetric structure, then there exists a direct sum decomposition of $\g$ into left-symmetric ideals
$$\g=\g_1\oplus C,$$
where $C$ is an abelian left-symmetric algebra (hence a commutative associative algebra) and $\g_1$ has a unique right identity.%, i.e., the left-symmetric structure on $\g_1$ is defined by an \'{e}tale prehomogenous vector space.
\end{thm}

For the proof of the above Theorem, the following result is crucial.

\begin{prop} \label{prop4.2.1}
Let $\fg=\s\oplus \fc$ be a reductive Lie algebra with a compatible left-symmetric structure $*$. Then there exist $\fg_{1}, \fc_1$ such that $\fg=\fg_{1}\oplus \fc_1$ is a direct sum of ideals of
the left-symmetric algebra $(\fg, \ast)$ with $\fg_{1}^{\s}=\{x\in \g_1|s*x=0, \forall s\in \s\}=\{0\}$.
\end{prop}

\begin{proof}
Let $\fg^{\ast}$ be the dual space of $\fg$. Then there exists a symplectic form $\omega$ on $\fg\dot+\g^\ast$
as defined in Theorem~\ref{thm-main1}. We still denote by $``\ast"$ the corresponding left-symmetric product
on $\g\dot+\g^\ast$. Naturally, we may write $\fg^{\ast}=V_{0}\dot+V_{1}$, where $V_{0}$ is the trivial
$\s$-module and $V_{1}$ is the direct sum of nontrivial $\s$-modules. Set $V_{0}^{\perp}=\fg_{1}\dot+V_{1}\dot+V_{0}$,
where $\fg_{1}=V_{0}^{\perp}\cap \fg$. Obviously $\s\subset \fg_{1}$, and there exists $\fc_1\subset \fc$ such that
$$(\fg_{1}\dot+V_{1})^{\perp}=(\fc_1\dot+V_{0}).$$

We will prove that both $(\fc_{1}\dot+V_{0})$ and $(\fg_{1}\dot+V_{1})$ are ideals of the left-symmetric algebra
$(\g\dot+\g^\ast,\ast)$ in the following steps.

(1) $\fg_{1}\ast V_{0}=V_{0}\ast \fg_{1}=\{0\}$.

For any $x\in \fg_{1}, \widetilde{y}\in \fg\dot+\g^\ast, v_{0}\in V_{0}$, we have
$$\omega(\widetilde{y}, x\ast v_{0})=\omega([\widetilde{y},x],v_{0})=0,$$
$$\omega(\widetilde{y}, v_{0}\ast x)=\omega([\widetilde{y},v_{0}],x)=0.$$
It follows that $\fg_{1}\ast V_{0}=V_{0}\ast \fg_{1}=\{0\}$.

(2) $\fg_{1}\ast \fc_1=\fc_1\ast \fg_{1}\subseteq \fc_1$.

For any $x\in \fg_{1}, v\in V_{1},c_{1}\in \fc_1$, from
$$\omega(v,x\ast c_{1})=\omega([v,x],c_{1})=0,$$
we get $\fg_{1}\ast c_{1}=c_{1}\ast \fg_{1}\subseteq \fc_{1}$.

(3) $V_{1}\ast \fc_1=\fc_1\ast V_{1}=\{0\}.$

For any $x\in \fg, v_{1}\in V_{1}, c_{1}\in \fc_1$, by
$$\omega(x,[v_{1},c_{1}])=-\omega(v_{1},[c_{1},x])-\omega(c_{1},[x,v_{1}])=0,$$
$$\omega(x,v_{1}\ast c_{1})=\omega([x,v_{1}],c_{1})=0,$$
 we obtain that $V_{1}\ast \fc_1=\fc_1\ast V_{1}=\{0\}.$

(4) $\fc_1\ast \fc_1\subseteq \fc_1$ and $\fc_1\ast V_{0}\subseteq V_{0}.$

For any $x\in V_{1}, y\in \fg_{1},c_{0},c_{1}\in \fc_1,v_{0}\in V_{0}$, using
$$\omega(x,c_{0}\ast c_{1})=\omega([x,c_{0}],c_{1})=0,$$
$$\omega(y,c_{0}\ast v_{0})=\omega([y,c_{0}],v_{0})=0,$$
we get $\fc_1\ast \fc_1\subseteq \fc_1$
and $\fc_1\ast V_{0}\subseteq V_{0}.$

Furthermore, for any $x\in\fg_{1}^{\s}$, we have $\omega([v,s],x)=\omega(v,s\ast x)=0$
for any $s\in\s$, $v\in V_1$, which implies that $x=0$ since $[\s,V_1]=V_1$ and $\omega$ is nondegenerate on $\g_1\dot+V_1$. Thus $\g_1^\s=\{0\}$.
\end{proof}

Now we need to consider the structure of $\g_1$, which is a reductive Lie algebra with a compatible left-symmetric structure such that $\g_1^\s=\{0\}$.

\begin{prop}\label{prop4.2.2}
Let $\fg_1=\s\oplus \fc$ be a reductive Lie algebra and let $(\rho,V)$ be
a representation of $\g$ with $\dim \g=\dim V$. Assume that
$V^\s=\{v\in V|\rho(\s)v=0\}=\{0\}$. If there exists
a sympletic structure $\omega$ on $\fg_1\dot+V^{\ast}$ with
$\omega|_{\fg_1\times \fg_1}=\omega|_{V^{\ast}\times V^{\ast}}=0$, then $\omega$ is an exact form, i.e.,
$\omega=df$ for some $f\in V$. Here $f$ is regarded as a linear function on $V^*$ and
$$df(x,v^*)=f(\rho(x)^*v^*)=v^*(\rho(x)f),\ \ \ \forall x\in\g_1,v^*\in V^*.$$
\end{prop}

\begin{proof}
%Note that the sympletic structure $\omega$ induces a left-symmetric structures on $\fg_1\dot+V^{\ast}$.
Since $\omega|_{\fg_1\times \fg_1}=\omega|_{V^{\ast}\times V^{\ast}}=0$, we just need to consider
$\omega|_{\fg_1\times V^{\ast}}$. Note that  $\omega|_{\s\times V^{\ast}}$ may be regarded as an element in $B^{1}(\s,\rho,V)$. By the Whitehead Lemma (see Theorem 3.12.1 in \cite[P.~220]{Var1984}), there exists some $f\in V$ such that $\omega|_{\s\times V^{\ast}}(x,\cdot)=\rho(x)f$, for any $x\in\s$,
from which we deduce
\[\omega|_{\s\times V^{\ast}}(x,\nu^{\ast})=\rho(x)f(\nu^{\ast})=-f(\rho^*(x)\nu^*)=-df(x,\nu^{\ast}),\ \ \ \forall \nu^\ast\in V^*.\]
Furthermore, for any $s\in \s,c \in \fc, v\in V^{\ast}$, we have
\begin{eqnarray*}
\omega(c,[s,v])&=&-\omega(s,[v,c])-\omega(v,[c,s])=-\omega(s,[v,c])\\
&=&df(s,[v,c])=-df(v,[c,s])-df(c,[s,v])\\
&=&-df(c,[s,v]).
\end{eqnarray*}
It follows that $\omega(\fc,v)=-df(\fc,v)$, for any $v\in V$, since $\rho^{\ast}(\s)V^{\ast}=V^{\ast}$.
\end{proof}

Now we are in a position to prove Theorem~\ref{thm4.1}.

\begin{proof}[Proof of Theorem \ref{thm4.1}]
Combining the above Propositions~\ref{prop4.2.1},~\ref{prop4.2.2} with Proposition ~\ref{prop3.5}, we just need to show the uniqueness of the right identity of $\g_1$, which  follows from $\g_1^\s=\{0\}$ since the difference of any two right identities belongs to $\g_1^\s$. %If $\g$ is a reduced left-symmetric algebra, the commutative ideal has at most one right identity and the reduced ideal has a unique right identity, which implies that the right identity of $\g$ is unique if it exists.
\end{proof}

\begin{rem}
The right identity of a left-symmetric algebra is not necessarily unique. Let $\g$ be the 2-dimensional Lie algebra with the standard basis $\{x,y\}$ such that $[x,y]=y$. Define a left-symmetric product on $\g$ by: $xx=-x$, $xy=0$, $yx=-y$ and $yy=0$. It is easy to check that $-x+ky$, for any $k\in\mC$, is a right identity.
\end{rem}

\section{Prehomogeneous vector spaces for Lie groups}\label{Prehomogeneous}

In order to classify the left-invariant flat connections on a reductive Lie group $G$, we need to classify left-symmetric structures on its Lie algebra $\g$, which, as we will see in this section, amounts to the classification of \'{e}tale prehomogeneous vector spaces for $G$ or $\g$.

\subsection{Preliminaries for prehomogeneous vector spaces}

The notion of prehomogeneous vector spaces was introduced by M. Sato around 1960 (see~\cite{Kim1998}). A prehomogeneous vector space is a triplet $(G,\rho,V)$ (or simply $(G,\rho)$) such that $(\rho,V)$ is a rational representation of an algebraic group $G$ and $G$ has an open dense orbit in $V$.
However, for the study of left-invariant flat connections on Lie groups, we need a more general definition of prehomogeneous vector spaces as follows.

\begin{defn}\label{Def-PV}
Let $G$ be a connected complex Lie group with Lie algebra $\g$ and $(\rho,V)$ a finite dimensional complex representation of $G$. We
call the triplet $(G,\rho,V)$ (or $(\fg,d\rho, V)$) a \textbf{prehomogeneous vector space}
(abbrev. $\PV$) if there exists $v\in V$ such that the orbit $G\cdot v$ is open in $V$ (or $d\rho(\fg)v=V$). Such point $v$ is called a \textbf{generic point}, and the isotropy subgroup $G_{v}=\{g\in G|\rho(g)v=v\}$ (or $\fg_{v}=\{x\in \fg|d\rho(x)v=0\}$) at a generic point $v$ is called a \textbf{generic isotropy subgroup} (or a \textbf{generic isotropy subalgebra}). %Any point $v\in V$ satisfying the above equivalent conditions is call a \textbf{generic point}, otherwise it is called \textbf{singular}. We denote by $S$ the set of singular points. It is easily seen that generic isotropy subgroups (isotropic subgroups at generic points) are isomorphic to each other.

A triplet $(G,\rho,V)$ is a $\PV$ only if $\dim G\geq \dim V$. In particular, a $\PV$ $(G,\rho,V)$ is called \textbf{\'{e}tale} if $\dim G=\dim V$. In this paper, we will focus on \'{e}tale $\PV$s.
\end{defn}

%When there is no confusion, we sometimes write $(G,\rho)$ instead of $(G,\rho,V)$.??????

For Zariski topology, nonzero open subsets of irreducible varieties are always dense, which is false for real manifolds. Fortunately we have

\begin{prop}[{\cite[P.~618, Proposition 10.1]{Kna2002}}]
If $(G,\rho,V)$ is a prehomogeneous vector space for a complex Lie group $G$, then there is just one open orbit, and that orbit is dense.
\end{prop}

\begin{lem}[{\cite[P.~23, Proposition 2.2]{Kim1998}}]
Let $(\rho,V)$ be a complex representation of $G$, $v\in V$. Then the following conditions are equivalent:
\begin{itemize}
  \item [(1)] $\overline{\rho(G)v}=V;$
  \item [(2)] $\dim G_{v}=\dim G-\dim V;$%, where $G_{v}=\{g\in G|\rho(g)v=v\}$;
  \item [(3)] $\dim \g_{v}=\dim \g-\dim V;$%, where $\fg_{v}=\{X\in \g|d\rho(X)v=0\}$;
  \item [(4)] $\{d\rho(x)v|x\in \g\}=V$.
\end{itemize}
\end{lem}

\begin{ex} Let $(\rho,V)$ be a complex representation of $H$ with $\dim V\leq n$. Then the triplet
$(H\times \GL(n),\rho\otimes\Lambda_{1},V\otimes V(n))$ is always a $\PV$ and is called a \textbf{trivial $\PV$}.
\end{ex}

%\begin{lem}
%Let $\g$ be a Frobenius Lie algebra. Then $(\g,\operatorname{ad}^*,\g^*)$ is an \'{e}tale $\PV$.
%\end{lem}

In the following we will collect some useful definitions and properties for $\PV$s and will see the difference between the cases for algebraic groups and those for Lie groups. Since we need the latter, we will modify the definitions and will give new proofs for each result when necessary.

\begin{defn}[{\cite[P.~245, Definition~7.39]{Kim1998}}]
Two triplets $(G_{i},\rho_{i},V_{i})$ $(i=1,2)$ are \textbf{equivalent} (or \textbf{isomorphic}) if there exist an isomorphism
$\sigma: \rho_{1}(G_{1})\rightarrow \rho_{2}(G_{2})$ of groups and an isomorphism $\tau: V_{1}\rightarrow V_{2}$
of vector spaces such that
$$\tau(\rho_{1}(g_{1})v_{1})=\sigma \rho_{1}(g_{1})(\tau(v_{1}))$$
for all $g_{1}\in G_{1}, v_{1}\in V_{1}$. That is to say, the diagram
\begin{displaymath}
\begin{array}{rcl}
 {V_{1}} & \stackrel{\tau}{\longrightarrow} & {V_{2}}\\
{\rho_{1}(g_{1})} \bigg{\downarrow} & {\curvearrowright} & \bigg{\downarrow}{\sigma\rho_{1}(g_{1})}  \\
{V_{1}}& \stackrel{\tau}{\longrightarrow}& {V_{2}}
\end{array}
\end{displaymath}
is commutative. Then we write $(G_{1},\rho_{1},V_{1})\cong (G_{2},\rho_{2},V_{2})$.
\end{defn}

By the above definition, we do not need to consider $G$ itself but the image $\rho(G)$.
For example, we have $(\SL(2)\times \SL(2),\Lambda_{1}\otimes\Lambda_{1},V(2)\otimes V(2))
\cong(\SO(4),\Lambda_{1},V(4))$, but $\SL(2)\times \SL(2)$ and $\SO(4)$ are not isomorphic. For the Lie algebra level, we may assume that the representations are faithful. Then $\g_i\cong d\rho_i(\g_i)$, which implies that $\g_1\cong\g_2$. Therefore, two $\PV$s $(\g_i,d\rho_i,V_i)$ are equivalent if and only if there exists a Lie algebra isomorphism $\sigma:\g_1\rightarrow\g_2$ such that $d\rho_1$ and $d\rho_2\circ\sigma$ are isomorphic as representations of $\g_1$. Furthermore, if $\g_1=\g_2$, then $\sigma$ is an automorphism of $\g_1$. In many cases, $\g_1$ has only inner automorphisms, which implies that $d\rho_1$ is isomorphic to $d\rho_2$.

The following Proposition is useful to classify $\PV$s for algebraic groups.

\begin{prop}[{\cite[P.~38, Proposition 2.21 and P.~245, Proposition 7.40]{Kim1998}}]\label{dual}
Let $(G,\rho,V)$ be a \textbf{reductive} $\PV$, i.e., $(\rho,V)$ is a rational representation for a reductive algebraic group $G$, and let $\rho^{\ast}:G\rightarrow \GL(V^{\ast})$ be the contragredient representation. Then the dual triplet $(G,\rho^{\ast},V^{\ast})$ is also a $\PV$. Moreover, $(G,\rho,V)\cong (G,\rho^{\ast},V^{\ast})$.
\end{prop}

Unfortunately, the above result does not hold for reductive Lie groups in general as the following example shows.

\begin{ex}
Let $G=\left\{\left.\left(\begin{array}{cccc}a_{1} & 0 & \cdots & 0 \\a_{2} & a_{1} & \cdots & 0 \\\vdots&\vdots&&\vdots\\a_{n} & 0 & \cdots & a_{1} \\\end{array}\right)\right|a_i\in\mC,a_1\neq 0\right\}$, $V=\mC^{n}, n>2$ and let $(\rho,V)$ be the natural representation of $G$.
\iffalse\[\rho(g)\left(
           \begin{array}{c}
             x_{1} \\
             x_{2} \\
             \vdots \\
             x_{n} \\
           \end{array}
         \right)=\left(
\begin{array}{cccc}
a_{1} & 0 & \cdots & 0 \\
a_{2} & a_{1} & \cdots & 0 \\
\vdots&\vdots&&\vdots\\
a_{n} & 0 & \cdots & a_{1} \\
\end{array}
\right)\left(
           \begin{array}{c}
             x_{1} \\
             x_{2} \\
             \vdots \\
             x_{n} \\
           \end{array}
         \right)=\left(
           \begin{array}{c}
             a_{1}x_{1} \\
             a_{2}x_{1}+a_{1}x_{2} \\
             \vdots \\
             a_{n}x_{1}+a_{1}x_{n} \\
           \end{array}
         \right).
\]\fi
It is easy to check that the triplet $(G,\rho,V)$ is a $\PV$. However,
\[\rho^{\ast}(g)\left(
           \begin{array}{c}
             y_{1} \\
             y_{2} \\
             \vdots \\
             y_{n} \\
           \end{array}
         \right)=\left(
\begin{array}{cccc}
\frac{1}{a_{1}} & -\frac{a_{2}}{a^{2}_{1}} & \cdots & -\frac{a_{n}}{a^{2}_{1}} \\
0 & \frac{1}{a_{1}} & \cdots & 0 \\
\vdots&\vdots&&\vdots\\
0 & 0 & \cdots & \frac{1}{a_{1}} \\
\end{array}
\right)\left(
           \begin{array}{c}
             y_{1} \\
             y_{2} \\
             \vdots \\
             y_{n} \\
           \end{array}
         \right)=\left(
           \begin{array}{c}
             \frac{1}{a_{1}}y_{1}-\sum_{i=2}^{n}\frac{a_{i}}{a^{2}_{1}}y_{i} \\
             \frac{1}{a_{1}}y_{2} \\
             \vdots \\
             \frac{1}{a_{1}}y_{n} \\
           \end{array}
         \right),\]
from which we deduce $(G,\rho^{\ast},V^{\ast})$ is not a $\PV$.
\end{ex}

The right statement should be the following, which can be easily proved.

\begin{prop}
Let $(G,\rho,V)$ be a $\PV$. If $\rho$ is completely reducible, then $(G,\rho^{\ast},V^{\ast})$ is a $\PV$.
\end{prop}

\begin{prop}\label{prop4.2.4}
Let $(G,\rho,V)$ be a $\PV$. If $ V_{0}$ is a submodule of $V$, then
$(G,\overline{\rho},\overline{V})$ is a $\PV$,
where $\overline{V}=V/V_{0}$ and $\overline{\rho}$ is the quotient representation.
\end{prop}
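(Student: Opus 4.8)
The plan is to reduce immediately to the infinitesimal criterion for prehomogeneity, i.e. part (4) of the Lemma above: a triplet $(G,\rho,V)$ is a $\PV$ precisely when there is a point $v\in V$ with $\{d\rho(X)v : X\in\fg\}=V$. So fix a generic point $v$ of $(G,\rho,V)$ and let $q\colon V\to\overline V=V/V_{0}$ denote the canonical projection, which is a surjective linear map.

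The first step is the elementary compatibility of $q$ with the two actions. By definition $\overline\rho$ is the representation of $G$ induced on the quotient $V/V_{0}$, so $\overline\rho(g)\circ q=q\circ\rho(g)$ for every $g\in G$; differentiating this identity in $g$ gives $d\overline\rho(X)\circ q=q\circ d\rho(X)$ for every $X\in\fg$. Evaluating at $v$ yields $d\overline\rho(X)\,q(v)=q\bigl(d\rho(X)v\bigr)$.

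The second step then finishes the argument: since $v$ is generic, $\{d\rho(X)v : X\in\fg\}=V$, and since $q$ is surjective,
$$\{d\overline\rho(X)\,q(v) : X\in\fg\}=q\bigl(\{d\rho(X)v : X\in\fg\}\bigr)=q(V)=\overline V.$$
By the Lemma (equivalence of (4) and (1)) this says exactly that $q(v)$ is a generic point of $(G,\overline\rho,\overline V)$, so $(G,\overline\rho,\overline V)$ is a $\PV$. Alternatively one can argue topologically: a surjective linear map is open, so $q(\rho(G)v)$ is an open subset of $\overline V$ contained in the single orbit $\overline\rho(G)q(v)$, and homogeneity of the orbit forces that orbit to be open. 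I do not expect a genuine obstacle here; the only point requiring a word of care is the (entirely routine) fact that the quotient representation is compatible with differentiation, which is immediate from the definition of $\overline\rho$.
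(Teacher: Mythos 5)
Your proof is correct, and it is cleaner than the one in the paper. The paper works with criterion (3) of the equivalence lemma, picks a basis $\{g_1,\dots,g_m\}$ of $\fg$ adapted to $V_0$ and a (fixed but somewhat informally chosen) complement, writes $v=\overline v+v_0$, and then runs a dimension count to verify $\dim\fg_{\overline v}=\dim\fg_v+\dim V_0$ and hence $\dim\fg=\dim\overline V+\dim\fg_{\overline v}$. You instead use criterion (4) together with the one-line observation that the quotient map $q\colon V\to\overline V$ is surjective and $\fg$-equivariant, so $q$ carries $\{d\rho(X)v:X\in\fg\}=V$ onto $\{d\overline\rho(X)q(v):X\in\fg\}=\overline V$. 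This avoids the paper's slightly awkward step of treating $V/V_0$ as if it were a complementary subspace of $V$ (it is a quotient, not a submodule), sidesteps the basis choice and the bookkeeping with $\fg_v$ versus $\fg_{\overline v}$, and gives the same conclusion with less machinery. Your alternative topological remark (a surjective linear map is open, so the image of the open orbit is open and contained in a single $G$-orbit) is also sound and would serve equally well as a standalone proof.
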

\iffalse\begin{proof}
Let $V$ be a $m$-dimensional vector space and $V_{0}$ be a $d$-dimensional submodule of $V$.
Since $(G,\rho,V)$ is a $\PV$, we see that there exists an element
$v\in V$ such that $\{\rho(G)v\}=V$. Hence we may choose a basis $\{g_{1},\ldots,g_{d},g_{d+1},\ldots,g_{m}\}$ of $\fg$ such
that $\{v_{1}=d\rho(g_{1})v,\ldots,v_{d}=d\rho(g_{d})v\}$ is a basis of $V_{0}$ and
$\{v_{d+1}=d\rho(g_{d+1})v,\ldots,v_{m}=d\rho(g_{m})v\}$ is a basis of $\overline{V}$. Obviously, one can write $v=\overline{v}+v_{0},(\overline{v}\in \overline{V},v_{0}\in V_{0})$, which implies that
$d\rho(g_{i})\overline{v}\in V_{0}$ for any $1\leq i\leq d$. Then it is easily verified
that $\dim(\fg_{\overline{v}})=\dim(\fg_{v})+d$ and we obtain
\[\dim \fg=\dim V+\dim \fg_{v}=\dim(\overline{V})+\dim(V_{0})+\dim(\fg_{\overline{v}})-d=
\dim(\overline{V})+\dim(\fg_{\overline{v}}).\]
Then we conclude that $(\fg,\overline{\rho},\overline{V})$ is a $\PV$.
\end{proof}\fi

\begin{rem}
The submodule of a prehomogeneous vector space is not necessarily prehomogeneous. Consider the coadjoint representation of $2$-dimensional nonabelian Lie algebra with the standard basis $x,y$ such that $[x,y]=y$. Let $x^*,y^*$ be the dual basis of $\g^*$. Then $\langle x^*\rangle$ is a submodule, on which the restriction of the coadjoint action is trivial.
\end{rem}

The following result was crucial in the study of $\PV$s.

\begin{prop}[{\cite[P.~225, Theorem 7.3]{Kim1998}}]\label{castling}
Let $\rho: G\rightarrow \GL(V)$ be a finite-dimensional
representation and $\dim V=m>n\geq1$. Then the following statements are equivalent:
\begin{itemize}
  \item [(I)] $(G\times\GL(n),\rho\otimes\Lambda_{1},V\otimes V(n))$ is a
 $\PV;$
  \item [(II)] $(G\times\GL(m-n),\rho^{\ast}\otimes\Lambda_{1},V^{\ast}\otimes V(m-n))$
  is a $\PV$.
\end{itemize}
Furthermore, the generic isotropy subgroups of $(I)$ and $(II)$ are isomorphic.
\end{prop}
\begin{proof}
Choosing any basis of $V$ and $V(n)$, we may identify $V\otimes V(n)$ with ${M(m,n)}$. It is easy to see that any generic point $x$ in $V\otimes V(n)$ is identified with a matrix $X\in{M(m,n)}$ of maximal rank. Choosing suitable basis, one may identify the generic point $x$ with the matrix $\left(\begin{array}{c}I_n\\0\end{array}\right)$, where $I_n$ is the identity matrix. Identifying $G$ with $\rho(G)$ if necessary, we may regard $G$ as a subgroup of $\GL(m)$. Therefore, the isotropy subgroup of $x$ is

$$H_1=\left\{(Y,Z)\in G\times\GL(n)\left|Y=\left(\begin{array}{cc}A&B\\0&D\end{array}\right),Z=(A^{-1})^T\right.\right\}.$$

For the representation $\rho^\ast\otimes\Lambda_1$, the isotropy subgroup of $\left(\begin{array}{c}0\\I_{m-n}\end{array}\right)$ is
$$H_2=\left\{(Y,W)\in G\times\GL(m-n)\left|Y=\left(\begin{array}{cc}A&B\\0&D\end{array}\right),W=D^T\right.\right\}.$$
%$$H_2=\left\{(Y,W)\in G\times\GL(m-n)\left|Y=\left(\begin{array}{cc}A&B\\0&D\end{array}\right),W=(D^{-1})^T\right.\right\}.$$
One may easily see that $H_1$ is isomorphic to $H_2$. Since
$$\dim H_1=\dim G+\dim\GL(n)-mn=\dim G-(m-n)n,$$
we have
$$\dim H_2=\dim G+\dim\GL(n-m)-m(m-n).$$
Thus $\left(\begin{array}{c}0\\I_{m-n}\end{array}\right)$ is a generic point in $V^\ast\otimes V(m-n)$.
\end{proof}

\begin{rem}
  From the proof, one may easily see that $\GL(n)$ and $\GL(m-n)$ in the statement may be replaced by $\SL(n)$ and $\SL(m-n)$, respectively.
\end{rem}

Naturally, we have the following useful definition.

\begin{defn}[{\cite[Definition~10]{SK1977}}]
We say that the prehomogeneous vector spaces $(I)$ and $(II)$ in Proposition~\ref{castling} are
\textbf{castling transforms} of each other. Two triplets $(G,\rho,V)$ and $(G',\rho',V')$ are said to be
\textbf{castling equivalent} if one is obtained from the other by a finite
number of successive castling transformations. A triplet $(G,\rho,V)$
is said to be \textbf{reduced} if $\dim V'\geq\dim V$ holds for any castling
transform $(G',\rho',V')$ of $(G,\rho,V)$.
\end{defn}

Our main purpose is to classify reduced $\PV$s.

\subsection{Some criteria for $\PV$s}

We now collect and prove some properties of $\PV$s , which were introduced to study $\PV$s for algebraic groups. We have to give these results new proofs since we are interested in the Lie groups case.

The following result is useful to construct new $\PV$s from a given one. In addition, those propositions built the relation between reductive Lie algebras with 2-simple Levi factors and simple Levi factors.

\begin{prop}[{\cite[P.~255, Proposition 7.52]{Kim1998}}]\label{isotropy subgroup}
Let $\rho_{i}:G\rightarrow \GL(V_{i})$ $(i=1,2)$ be finite-dimensional
representations. Then the following conditions are equivalent:
\begin{enumerate}
  \item [(I)]$(G,\rho_{1}\oplus\rho_{2},V_{1}\oplus V_{2})$ is a $\PV$.
  \item [(II)]$(G,\rho_{1},V_{1})$ is a $\PV$ and $(H,\rho_{2}|_{H},V_{2})$ is
             also a $\PV$, where $H$ denotes the generic
             isotropy subgroup of $(G,\rho_{1},V_{1})$.
\end{enumerate}
%Sometimes we say that $(G,\rho_{1}\oplus\rho_{2},V_{1}\oplus V_{2})$ and $(H,\rho_{2}|_{H},V_{2})$ are isotropic $\PV$\textbf{-equivalent} to each other. ??????
\end{prop}
\begin{proof}
(I)$\Rightarrow$ (II). Assume that $(G,\rho_{1}\oplus\rho_{2},V_{1}\oplus V_{2})$ is a $\PV$.
Then there exists an element $v=v_{1}+v_{2},v_{i}\in V_{i}$ such that
$d(\rho_{1}\oplus\rho_{2})(\fg)\cdot(v_{1}+v_{2})=d\rho_{1}(\fg)v_{1}\oplus d\rho_{2}(\fg)v_{2}=V_{1}\oplus V_{2}$, from which we deduce that $d\rho_{1}(\fg)v_{1}=V_{1}$ and $d\rho_{2}(\fg)v_{2}=V_{2}$. Thus we get the first part of the assertion. Since $\dim \fg-\dim(V_{1}\oplus V_{2})=\dim \fg_{v_{1}+v_{2}}$ and $\dim \fg-\dim V_{1}=\dim \fg_{v_{1}}$. We see that
$\dim \fg_{v_{1}+v_{2}} =\dim \fg_{v_{1}}-\dim  V_{2} $.

From
\begin{eqnarray*}
     % \nonumber to remove numbering (before each equation)
(\fg_{v_{1}})_{v_{2}}&=&\{x\in \fg_{v_{1}}| d\rho_{2}(x)(v_{2})=0\} \\
&=&\{x\in \fg| d\rho_{1}(x)(v_{1})=0\textrm{ and }d\rho_{2}(x)(v_{2})=0\} \\
&=&\fg_{v_{1}+v_{2}},
\end{eqnarray*}
we conclude that $\dim (\fg_{v_{1}})_{v_{2}}=\dim \fg_{v_{1}}-\dim V_{2}$.

(II)$\Rightarrow$ (I). Since $\dim \fg-\dim V_{1}=\dim \fg_{v_{1}}$ and
$\dim (\fg_{v_{1}})_{v_{2}}=\dim \fg_{v_{1}}-\dim V_{2}$. We have
$\dim \fg-\dim V_{1}-\dim V_{2}=\dim (\fg_{v_{1}})_{v_{2}}=\dim \fg_{v_{1}+v_{2}}$.
\end{proof}

\begin{lem}
  Let $(G,\rho,V)$ and $(G,\rho',V')$ be two triplets, and let $f:V\rightarrow V'$ be a $G$-equivariant continuous surjective map. If $(G,\rho,V)$ is a $\PV$, then so is $(G,\rho',V')$.
\end{lem}
\begin{proof}
  Since $f$ is $G$-equivariant, it maps an orbit in $V$ onto an orbit in $V'$. If $(G,\rho,V)$ is a $\PV$ and $O$ is the open dense orbit, then one can easily see that $f(O)$ is a dense orbit of $G$ in $V'$, and it must be open.
\end{proof}

Let $\rho:H\rightarrow\GL(V(d))$ be a representation of a Lie group $H$
on the $d$-dimensional vector space $V(d)$. Identify the vector space $\wedge^{2}(V(d))$ (resp. $S^{2}(V(d))$) with the set of all $d\times d$ skew-symmetric (resp. symmetric) matrices. Define the representation
$\wedge^{2}\rho$ (resp. $S^{2}\rho$) of $H$ on $\wedge^{2}(V(d))$ (resp. $S^{2}(V(d))$)
by $X\mapsto\rho(h)X\rho(h)^{T}$ for $X\in \wedge^{2}(V(d))$ (resp. $X\in S^{2}(V(d))$),
$h\in H$.

\begin{prop}[{\cite[P.~40, Proposition 13]{SK1977}}]\label{Sp(n)}
Assume that $2n\geq d$. Then  the following
statements are equivalent:
\begin{itemize}
  \item [(I)] $(\Sp(n)\times G,\Lambda_{1}\otimes\rho,V(2n)\otimes V(d))$ is a $\PV;$
  \item [(II)] $(G,\wedge^{2}\rho,\wedge^{2}(V(d)))$ is a $\PV$.
\end{itemize}

\end{prop}

\begin{proof}
Let $\langle,\rangle$ be a nondegenerate skew-symmetric bilinear form on $V(2n)\otimes V(2n)$
which is invariant under the action of $\Sp(n)$. For an element
$v=(v_{1},\ldots,v_{d})\in V(2n)\otimes V(d)$, let $f(v)$ be a $d\times d$ skew-symmetric
matrix with $(i,j)$-element $\langle v_{i},v_{j}\rangle, (i,j=1,\ldots,d)$. Then $f$ is surjective and $\Sp(n)\times G$-equivariant with $\Sp(n)$ acting on $\wedge^2(V(d))$ trivially. Therefore (I) implies (II).

For the converse statement, replacing $G$ by $\GL(d)$ and considering the two $\PV$s
$$(\Sp(n)\times \GL(d),\Lambda_{1}\otimes\rho, V(2n)\otimes V(d)),\ \ \ (\GL(d),\Lambda_{2},\wedge^2(V(d))),$$
one may easily get that $f:V(2n)\otimes V(d)\rightarrow\wedge^2(V(d))$ is surjective and $\Sp(n)\times\GL(d)$-equivariant.
Thus $f$ maps the open orbit in $V(2n)\otimes V(d)$ surjectively onto that in $\wedge^2(V(d))$, which implies that if $v\in V(2n)\otimes V(d)$ is a generic point, so is $f(v)\in\wedge^2(V(d))$. Let $H$ be the isotropy subgroup at $v$, and let $\pi:H\rightarrow\GL(d)$ be the natural homomorphism. Then $\Ker\pi$ is locally isomorphic to $\Sp(n-l)$ for $d=2l$ and to $\Sp(n-l-1)\ltimes U(2n-2l-1)$ for $d=2l+1$ (see Propositions 17 and 18 in Pages 102--104 \cite{SK1977}), and $\pi(H)<\GL(d)$ is the isotropy subgroup at $f(v)$.

If $v'\in\wedge^2(V(d))$ is a generic point for the action under $G$ with the isotropy subgroup $H'$, then $v'$ is a generic point for the $\GL(d)$-action and there is a generic point $v\in V(2n)\otimes V(d)$ such that $f(v)=v'$. Let $H_v$ be the isotropy subgroup at $v\in V(2n)\otimes V(d)$ for the $\Sp(n)\times G$-action. Then $\pi(H_v)\cong H'$ and $H_v$ is an extension of $H'$ over $\Ker\pi$. Noting that
$$\dim\Sp(n)+\dim G-\dim H_v=n(2n+1)-\dim\Ker\pi+(\dim G-\dim H')=2nd,$$
we have that $(\Sp(n)\times G,\Lambda_{1}\otimes\rho,V(2n)\otimes V(d))$ is a $\PV$.
\end{proof}

Replacing the nondegenerate skew-symmetric bilinear form on $V(2n)\times V(2n)$ by a symmetric one, one can similarly prove

\begin{prop}[{\cite[P.~41, Proposition 14]{SK1977}}]\label{SO(n)}
Assume that $n\geq d$. Then  the following statements are equivalent:
\begin{itemize}
  \item [(I)] $(\SO(n)\times G,\Lambda_{1}\otimes\rho,V(n)\otimes V(d))$ is a $\PV;$
  \item [(II)] $(G,S^{2}\rho,S^{2}(V(d)))$ is a $\PV$.
\end{itemize}

\end{prop}

By the same idea, we have
\begin{thm}[{\cite[Theorems~1.14~and~1.16]{KKTI1988}}]\label{saver}
Let $(\rho_{i},V(m_{i}))$ $(i=1,2)$ be finite-dimensional representations of $G$. If $n\geq \Max\{m_{1}, m_{2}\}$, then the following
statements are equivalent:
\begin{itemize}
  \item [(I)] $(G\times\GL(n)\ (\mbox{or\ $\SL(n)$}),\rho_{1}\otimes\Lambda_{1}+\rho_{2}\otimes\Lambda_{1}^{\ast},V(m_{1})\otimes V(n)+V(m_{2})\otimes V(n))$ is a $\PV;$
  \item [(II)] $(G,\rho_{1}\otimes\rho_{2},V(m_{1})\otimes V(m_{2}))$ is a $\PV$.
\end{itemize}
\end{thm}
\begin{proof}
Similar to the proof for Proposition \ref{Sp(n)}, we define a $G\times\GL(n)$-equivariant surjective map $f:V(m_{1})\otimes V(n)+V(m_{2})\otimes V(n)\rightarrow V(m_{1})\otimes V(m_{2})$ by $f(v)=f(X,Y)=XY^{T}$, where $\GL(n)$ acts on $V(m_1)\otimes V(m_2)$ trivially. Then (I) implies (II).

For the converse statement, the argument in \cite{KKTI1988} still works.
\end{proof}

By the above Theorem, one can get the following result which will be used later.
\begin{cor}\label{cor4.19}
If $m_1\geq 2$ and $m_2\geq 2$, then the triplet
$$(\fsl(n)\oplus\fgl(1)^k,\Lambda_{1}\otimes\mu_1+\Lambda_{1}^{\ast}\otimes \mu_2,V(n)\otimes V(m_1)\oplus V(n)\otimes V(m_2))$$
is not a $\PV$.
\end{cor}

\subsection{\'{E}tale PVs and LSAs}\label{section3.3}

For Frobenius Lie algebras, the adjoint representations are $\PV$s. More generally, we have

\begin{thm}\label{thm3.2.1}
Let $(\fg,\ast)$ be a left-symmetric algebra with a right identity. Then the left regular representation gives an \'{e}tale $\PV$. Conversely, if $(\fg,\rho,V)$ is an \'{e}tale $\PV$ for a Lie algebra $\fg$, then $\fg$ has a left-symmetric structure with a right identity.
\end{thm}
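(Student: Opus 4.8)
The plan is to prove the two implications separately; the first is immediate and the second carries the content.

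\textbf{LSA with right identity $\Rightarrow$ cuspidal $\PV$.} Suppose $(\fg,\ast)$ has a right identity $e$, i.e.\ $x\ast e=x$ for all $x\in\fg$. Recall (\S\ref{section2.2}) that $L\colon\fg\to\End(\fg)$, $L_x(y)=x\ast y$, is a representation of the adjacent Lie algebra $\fg$. I would take $v=e$ as the candidate generic point: since $\{L_x e:x\in\fg\}=\{x\ast e:x\in\fg\}=\fg$, the element $e$ is a generic point, and as $\dim\fg=\dim\fg$ trivially, $(\fg,L,\fg)$ is a cuspidal $\PV$. Nothing more is needed here.

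\textbf{Cuspidal $\PV$ $\Rightarrow$ LSA with right identity.} Let $(\fg,\rho,V)$ be a cuspidal $\PV$ with generic point $v$, so $\rho$ is a Lie algebra homomorphism into $\End(V)$. First introduce $\pi\colon\fg\to V$, $\pi(x)=\rho(x)v$; genericity of $v$ makes $\pi$ surjective, and $\dim\fg=\dim V$ makes it a linear isomorphism (as noted in the Remark of \S\ref{section3.1}). Transport the $\fg$-action on $V$ back along $\pi$ by \emph{defining}
\[x\ast y:=\pi^{-1}\big(\rho(x)\pi(y)\big),\qquad\text{equivalently}\qquad \pi(x\ast y)=\rho(x)\pi(y),\]
which is visibly bilinear. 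Set $e:=\pi^{-1}(v)$; then $x\ast e=\pi^{-1}(\rho(x)v)=\pi^{-1}(\pi(x))=x$, so $e$ is a right identity.

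It remains to check that $\ast$ is a left-symmetric structure compatible with the bracket of $\fg$. For compatibility with the bracket I would compute, using that $\rho$ is a homomorphism and $\pi(w)=\rho(w)v$,
\[x\ast y-y\ast x=\pi^{-1}\big(\rho(x)\rho(y)v-\rho(y)\rho(x)v\big)=\pi^{-1}\big(\rho([x,y])v\big)=[x,y].\]
For the left-symmetry identity~(\ref{eq4}), put $A(x,y,z)=(x\ast y)\ast z-x\ast(y\ast z)$ and push it through $\pi$:
\[\pi\big(A(x,y,z)\big)=\rho(x\ast y)\pi(z)-\rho(x)\rho(y)\pi(z)=\big(\rho(x\ast y)-\rho(x)\rho(y)\big)\pi(z).\]
Subtracting the same expression with $x,y$ interchanged and using the previous display together with $\rho([x,y])=\rho(x)\rho(y)-\rho(y)\rho(x)$, the difference becomes $\big(\rho([x,y])-\rho([x,y])\big)\pi(z)=0$; injectivity of $\pi$ then gives $A(x,y,z)=A(y,x,z)$, which is exactly~(\ref{eq4}). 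The one genuinely non-routine step—the main obstacle—is this last cancellation: $\rho$ is not multiplicative for $\ast$ (in general $\rho(x\ast y)\neq\rho(x)\rho(y)$), but its antisymmetrization in $x,y$ is forced to equal the commutator $[\rho(x),\rho(y)]$ because both compute $\rho$ of the Lie bracket, and this is precisely what annihilates the antisymmetric part of the associator. Everything else is bookkeeping with the isomorphism $\pi$. (Implicitly the statement is over $\mathbb{C}$, matching the definition of $\PV$; alternatively one could route the argument through Frobenius Lie algebras via Corollary~\ref{cor3.6}, but the direct construction above has the merit of producing exactly the left-regular representation required in the statement.)
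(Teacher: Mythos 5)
Your proposal is correct and follows the same construction as the paper: use the bijection $\pi(x)=\rho(x)v$ coming from a generic point $v$ to transport the $\rho$-action into a bilinear product on $\fg$, and take $e=\pi^{-1}(v)$ as the right identity. The only difference is that you actually carry out the verification of the left-symmetry identity~(\ref{eq4}) — pushing the associator through $\pi$ and using $\rho(x\ast y)-\rho(y\ast x)=\rho([x,y])=[\rho(x),\rho(y)]$ to kill the antisymmetric part — whereas the paper dismisses this step with "one may easily show"; your spelled-out computation is a faithful filling-in of that gap, not a different route.
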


\begin{proof}
Let $e$ be a right identity of a left-symmetric algebra $(\fg,\ast)$ and let
$L:\fg\rightarrow \End(\fg)$ be the left regular representation. It is easy to see that $(\fg, L,\fg)$ is an \'{e}tale $\PV$ since $e$ is a generic point.

Conversely, let $(\g,\rho,V)$ be an \'{e}tale $\PV$. Then there exists an element $v\in V$ such that $\{\rho(x) v \mid x\in \fg\}=V$ and the mapping $x\mapsto \rho(x)v$ is a bijection from $\g$ onto $V$. Therefore, for any $x,y\in\g$, there exists a unique $z\in\g$ such that
\begin{equation}\label{eq_pv2lsa}
\rho(x)\rho(y)v=\rho(z)v,
\end{equation}
which defines a product ``$*$'' on $\g$ by
$x*y=z.$
One may easily show that $(\g,*)$ is a left-symmetric algebra. Furthermore, there exists a unique $e\in\g$ such that $\rho(e)v=v$. Equation~$(\ref{eq_pv2lsa})$ shows that $x*e=x$, i.e., $e$ is a right identity.
\end{proof}

\begin{prop}
Let the two triplets $(\fg,\rho_{1},V_{1})$ and $(\fg,\rho_{2},V_{2})$ be
equivalent \'{e}tale $\PV$s. Then $(\fg,\ast)$ and $(\fg,\cdot)$
are isomorphic left-symmetric algebras, where ``$\ast$'' and ``$\cdot$''
are the corresponding left-symmetric structures on $\fg$ respectively.
\end{prop}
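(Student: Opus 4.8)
The plan is to transport the data of the equivalence of $\PV$s directly into an isomorphism of the associated left-symmetric algebras, using the construction in the proof of Theorem~\ref{thm3.2.1}. Recall that the product $*$ is defined by choosing a generic point $v_1\in V_1$, identifying $\fg$ with $V_1$ via the bijection $x\mapsto\rho_1(x)v_1$, and setting $x*y=z$ where $\rho_1(x)\rho_1(y)v_1=\rho_1(z)v_1$; similarly $\cdot$ is defined from a generic point $v_2\in V_2$ via $y\mapsto\rho_2(y)v_2$. Since the two $\PV$s are equivalent, there are isomorphisms $\sigma:\rho_1(G_1)\to\rho_2(G_2)$ and $\tau:V_1\to V_2$ with $\tau(\rho_1(g)x)=\sigma\rho_1(g)(\tau(x))$; here (as the paper works at the Lie-algebra level and treats $\fg$ as common to both triplets) I take $\sigma$ to be induced by an automorphism of $\fg$, so that $\tau\circ d\rho_1(x)=d\rho_2(\sigma(x))\circ\tau$ for all $x\in\fg$.

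First I would reduce to the case where the generic points match up, i.e. where $\tau(v_1)=v_2$. This is legitimate because any two generic points of a $\PV$ differ by the group action, and changing the generic point changes the left-symmetric structure only by an isomorphism (this observation is essentially the content of the next theorem in the paper, Theorem on isomorphic LSAs inducing equivalent $\PV$s, and it is easy to see directly: if $v_2'=\rho_2(h)v_2$ then conjugation by $\rho_2(h)$ intertwines the two identifications). So after composing $\tau$ with a suitable group element we may assume $\tau(v_1)=v_2$. Then I claim $\sigma:\fg\to\fg$ is the desired isomorphism of left-symmetric algebras. Indeed, for $x,y\in\fg$ with $x*y=z$ we have $d\rho_1(x)d\rho_1(y)v_1=d\rho_1(z)v_1$; applying $\tau$ and using the intertwining relation three times together with $\tau(v_1)=v_2$ gives
\begin{equation*}
d\rho_2(\sigma(x))\,d\rho_2(\sigma(y))\,v_2 = d\rho_2(\sigma(z))\,v_2,
\end{equation*}
which says exactly $\sigma(x)\cdot\sigma(y)=\sigma(z)=\sigma(x*y)$. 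Hence $\sigma$ is an algebra isomorphism, and being a linear bijection it is an isomorphism of left-symmetric algebras.

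One point that needs care — and where I expect the only real subtlety to lie — is the precise meaning of the equivalence datum $\sigma$ when $G_1$ and $G_2$ are merely abstract groups with the same Lie algebra up to isomorphism: the definition of equivalence only gives $\sigma:\rho_1(G_1)\to\rho_2(G_2)$, so one must check that this descends to (or lifts to) an isomorphism of the common Lie algebra $\fg$ compatible with $d\rho_1$ and $d\rho_2$ in the way used above. Since $\rho_i(G_i)$ and its Lie algebra $d\rho_i(\fg_i)$ determine each other, and the $\PV$ being cuspidal forces $d\rho_i$ to be injective (the orbit map is a bijection, so the stabilizer is trivial, hence $\ker d\rho_i=0$), the map $\sigma$ between the image groups differentiates to a Lie algebra isomorphism $d\rho_1(\fg)\to d\rho_2(\fg)$, which we pull back through the injections $d\rho_i$ to an isomorphism $\fg\to\fg$; this is the map I called $\sigma$ above. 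Once this identification is in place the rest is the bookkeeping with the intertwining relation sketched above, so the heart of the argument is really just (i) normalizing the generic point and (ii) the three-fold application of $\tau\circ d\rho_1(x)=d\rho_2(\sigma(x))\circ\tau$.
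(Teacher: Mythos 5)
Your proof is correct and follows essentially the same line as the paper's: extract from $\sigma:\rho_1(\fg)\to\rho_2(\fg)$ an automorphism of $\fg$ (the paper calls it $h$), then apply the intertwining relation three times to get $h(x*y)=h(x)\cdot h(y)$. You are slightly more careful on two minor points the paper leaves implicit — normalizing so that $\tau(v_1)$ is the generic point used to define $\cdot$ (the paper silently does this by taking $\tau(v_1)$ as the base point in $V_2$), and justifying why $\sigma$ lifts to an automorphism of $\fg$ via injectivity of $d\rho_i$ — but the substance of the argument is identical.
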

\begin{proof}
Since $(\fg,\rho_{1},V_{1})\cong(\fg,\rho_{2},V_{2})$ are equivalent, there exists an isomorphism
$\sigma: \rho_{1}(\fg)\rightarrow \rho_{2}(\fg)$ of Lie algebras and an isomorphism
$\tau: V_{1}\rightarrow V_{2}$ of vector spaces such that
$$\tau(\rho_{1}(x)v_{1})=\sigma \rho_{1}(x)(\tau(v_{1}))$$
for all $x\in \fg, v_{1}\in V_{1}$.

Since $(\fg,\rho_{1},V_{1})$ and $(\fg,\rho_{2},V_{2})$ are \'{e}tale $\PV$s, we see that the isomorphism $\sigma$ induces an automorphism $h$ of $\fg$ such that the following diagram commutes:
\begin{displaymath}
\begin{array}{rcl}
 {\fg} & \stackrel{h}{\longrightarrow} & {\fg}\\
\bigg{\downarrow} & {\curvearrowright} & \bigg{\downarrow}  \\
{\rho_{1}(\fg)}& \stackrel{\sigma}{\longrightarrow}& {\rho_{2}(\fg)}
\end{array}
\end{displaymath}
By Theorem \ref{thm3.2.1}, we know that there are two products
``$\ast$'' and ``$\cdot$'' on
$\fg$ such that $\rho_{1}(x)\rho_{1}(y)=\rho_{1}(x\ast y)$ and
$\rho_{2}(x)\rho_{2}(y)=\rho_{2}(x\cdot y)$. Furthermore, we have
$$\sigma\rho_{1}(x)\sigma\rho_{1}(y)\tau(v_{1})
=\sigma\rho_{1}(x)(\tau(\rho_{1}(y)v_{1}))
=\tau(\rho_{1}(x)\rho_{1}(y)v_{1})
=\tau(\rho_{1}(x\ast y)v_{1}),$$
and
\begin{eqnarray*}
% \nonumber to remove numbering (before each equation)
\sigma\rho_{1}(x)\sigma\rho_{1}(y)\tau(v_{1})&=&\rho_{2}(h(x))\rho_{2}(h(y))\tau(v_{1})\\
&=&\rho_{2}(h(x)\cdot h(y))\tau(v_{1})\\
&=&\sigma\rho_{1}(h^{-1}(h(x)\cdot h(y)))\tau(v_{1}).
\end{eqnarray*}
Thus we obtain $x\ast y=h^{-1}(h(x)\cdot h(y))$ for any $x,y\in\fg$.
\end{proof}
\begin{prop}
Let $(\fg_{1},\ast)$ and $(\fg_{2},\cdot)$ be two isomorphic left-symmetric algebras with a right identity and let $(\fg_1, L_{{1}},\g_1)$ and $(\fg_2,L_2,\g_2)$ be
the corresponding left-regular representations. Then the two $\PV$s
$(\fg_1, L_{{1}},\fg_{1})$ and $(\fg_2,L_{{2}},\g_{2})$ are equivalent.
\end{prop}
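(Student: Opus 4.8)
The plan is to unwind both directions of the equivalence of $\PV$s directly from the definition, using the isomorphism of left-symmetric algebras as the bridge. First I would fix an isomorphism $h\colon(\fg_1,\ast)\to(\fg_2,\cdot)$ of left-symmetric algebras; since $h$ is in particular an isomorphism of the adjacent Lie algebras, it intertwines the left-regular Lie-algebra representations $L_1$ and $L_2$ in the sense that $L_2(h(x))\circ h = h\circ L_1(x)$ for all $x\in\fg_1$, simply because $L_2(h(x))(h(y)) = h(x)\cdot h(y) = h(x\ast y) = h(L_1(x)(y))$. This says precisely that the linear isomorphism $\tau:=h\colon\fg_1\to\fg_2$ (viewing the underlying vector spaces of the representations) and the group isomorphism $\sigma\colon L_1(\fg_1)\to L_2(\fg_2)$ determined by $L_1(x)\mapsto L_2(h(x))$ make the required square commute. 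So the bulk of the work is just checking that $\sigma$ is well defined as a map on the images $L_i(\fg_i)$ and that it is a genuine isomorphism.

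Next I would verify these two points. For well-definedness of $\sigma$: if $L_1(x)=L_1(x')$ then $x\ast y = x'\ast y$ for all $y$; evaluating at the right identity $e_1$ of $(\fg_1,\ast)$ gives $x = x\ast e_1 = x'\ast e_1 = x'$, so $L_1$ is injective, and likewise $L_2$ is injective. Hence $\sigma = L_2\circ h\circ L_1^{-1}$ is a well-defined bijection from $L_1(\fg_1)$ onto $L_2(\fg_2)$, and it is a Lie-algebra (equivalently, group, after exponentiating, but here we only need the linear/representation-theoretic version used in the definition of equivalence) homomorphism because $h$ is. The existence of the right identities $e_1,e_2$ is exactly the hypothesis, and it is also what guarantees, via Theorem~\ref{thm3.2.1}, that $(\fg_i,L_i,\fg_i)$ are cuspidal $\PV$s in the first place, so the statement is not vacuous.

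Finally I would assemble the commuting square: for $x,y\in\fg_1$,
\[
\tau(L_1(x)\,y) = h(x\ast y) = h(x)\cdot h(y) = L_2(h(x))\,\tau(y) = \sigma(L_1(x))\,\tau(y),
\]
which is the defining condition for $(\fg_1,L_1,\fg_1)\cong(\fg_2,L_2,\fg_2)$ in the sense of the equivalence definition recalled earlier in the paper. I expect no serious obstacle here; this proposition is essentially the converse bookkeeping to the preceding one (equivalent cuspidal $\PV$s give isomorphic LSAs), and the only subtlety worth stating explicitly is the injectivity of the left-regular representation $L_i$, which fails for a general left-symmetric algebra but holds here precisely because of the right identity — without that remark the map $\sigma$ on the image $L_i(\fg_i)$ would not obviously be well defined.
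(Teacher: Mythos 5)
Your proposal is correct and takes essentially the same route as the paper: define $\tau = h$ and $\sigma\colon L_1(x)\mapsto L_2(h(x))$ and check the commuting square. The one detail you spell out that the paper dismisses with "one can easily see that $\sigma$ is well-defined" is the injectivity of $L_i$, obtained by evaluating at the right identity; this is indeed the point that would fail for a general LSA, and it is worth having made explicit.
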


\begin{proof}
Let $\varphi:\g_1\rightarrow\g_2$ be an isomorphism of left-symmetric algebras. Define $\sigma: L_{1}(\fg)\rightarrow L_{2}(\fg)$ by
\[\sigma L_{1}(x)=L_{2}(\varphi(x)).\]
One can easily see that $\sigma$ is well-defined and is an isomorphism of Lie algebras.
It follows that $\varphi(L_1(x)(y))=\sigma L_{{2}}(x)(\varphi(y))$ and the assertion follows.
\end{proof}

\iffalse
\begin{lem}
Let $(\g\oplus \fgl(n),\sigma\otimes\Lambda_{1},V(m)\otimes V(n))$ be an \'{e}tale $\PV$ and denote by $\ast$ the corresponding left-symmetric product on $\g\oplus\fgl(n)$. Then $\fgl(n)$ is a left-symmetric subalgebra such that for any $X,Y\in\fgl(n)$,
$$X*Y=XY,$$
where $XY$ is the usual matrix product.
\end{lem}
\begin{proof}
Identify $V(m)\otimes V(n)$ with ${M(m,n)}$. Let $v$ be a generic point. Then for any $X,Y\in\fgl(n)$,
$$\Lambda_1(X)(\Lambda_1(Y)v)=(vY^{T})X^{T}=v(XY)^{T}=\Lambda_1(XY)v.$$
Thus, there for $XY=X*Y$ by the definition of $\ast$.
\end{proof}
\fi

\section{\'{E}tale PVs for reductive Lie algebras with simple Levi factors I}\label{Cuspidal PV}

%\subsection{\'{E}tale PVs for reductive Lie groups with one dimensional center}
\iffalse
\begin{itemize}
  \item [(1)] If $G_{s}=A_{n}$, then either $l=1$ and $\sigma_{1}$ must be one of $\Lambda_{1}^{(\ast)},\ 2\Lambda_{1}^{(\ast)},\ 3\Lambda_{1}^{(\ast)}(n=2),\ \Lambda_{2}^{(\ast)}, \Lambda_{3}^{(\ast)}(n=6,7,8)$, or $\sum_{i}^{l}\sigma_{i}$ must be one of $\Lambda_{1}+\Lambda_{1}^{\ast},\ 2\Lambda_{1}+\Lambda_{1}^{(\ast)},
  \Lambda_{2}+\Lambda_{1}^{(\ast)},\ \Lambda_{2}+\Lambda_{1}+\Lambda_{1}^{\ast},
  \Lambda_{3}+\Lambda_{1}(n=6),\Lambda_{3}+\Lambda_{1}^{(\ast)}(n=7).$
  \item [(2)] If $G_{s}=\operatorname{B}_{n},\operatorname{D}_{n}$, then either $l=1$ and $\sigma_{1}$ must be one of
$\Lambda_{1},\spin\ (n=7,8,9,10,11,12,14)$, or $l=2$ and $\sigma_{1}+\sigma_{2}=\Lambda_{1}+\spin \ (n=7,8,10,12)$.
  \item [(3)] If $G_{s}=\operatorname{C}_{n}$, then either $l=1$ and $\sigma_{1}=\Lambda_{1}$,
or $l=2$ and $\sigma_{1}+\sigma_{2}=\Lambda_{2}+\Lambda_{1}(n=2),\Lambda_{3}+\Lambda_{1}(n=3).$
  \item [(4)] If $G_{s}$ is an exceptional simple Lie group, then we have $l=1$ and
$(\GL(1)^{k_{1}}\times G_{s},\tau_{1}\otimes\sigma_{1})$ must be one of the following:
$(\GL(1)^{k_{1}}\times(G_{2}),\tau_{1}\otimes\Lambda_{2}),\ (\GL(1)^{k_{1}}\times \operatorname{E}_{6},\tau_{1}\otimes\Lambda_{1}),\ (\GL(1)^{k_{1}}\times \operatorname{E}_{7},\tau_{1}\otimes\Lambda_{6})$.
\end{itemize}
\fi

Now we want to classify \'{e}tale $\PV$s for reductive Lie algebras. In this section we consider reductive Lie algebras with simple Levi factors and 1-dimensional center. The main result is known, see, for example, \cite{Bau1999,Bur1996}. The main ingredient in their papers are rational representations and $h$-transformations (see \cite{Hel1971}). As we mentioned before, left-regular representations are not necessarily rational, even for reductive Lie algebras. We applied different method, which depends on Theorem~\ref{thm4.1}, saying that left-symmetric algebras $\g$ with reductive adjacent Lie algebras must be the direct sum of two ideals, one of which is abelian and the other has a right identity. Therefore, if the reductive Lie algebra $\g$ has 1-dimensional center, then it is easy to see that $\g$ is simple as left-symmetric algebra, hence it has a right identity, from which we deduce that the left symmetric structures on $\g$ are 1-1 correspondence with the \'{e}tale $\PV$s for $\g$.

We will see that if $\g$ has 1-dimensional center, then the classification of \'{e}tale $\PV$s amounts to that of rational ones. The key observation is the following. Let $\g=\s\oplus\fc$ be the direct sum of two ideals with $\fc$ one-dimensional and
let $(\g,\rho,V)$ be an \'{e}tale $\PV$.
For any $c\in \fc$, let $\rho(c)=c_s+c_n$ be the Jordan-Chevalley decomposition of $\rho(c)$. Define $\pi:\g\rightarrow \fgl(V)$ by $\pi(s+c)=\rho(s)+c_s$. It is easy to check that $\pi$ is a representation of $\g$. Then the following observation is crucial for the discussion in the section.

\begin{lem}\label{lem-semisimple}
Let $\g=\s\oplus\fc$ with $\s$ a perfect Lie algebra, i.e., $[\s,\s]=\s$. Then the triplet $(\g,\rho,V)$ is a $\PV$ if and only if so is $(\g,\pi,V)$.
\end{lem}
\begin{proof}
Let the notation be as above. For nonzero $c\in\fc$, let $\rho(c)=c_s+c_n$ be the Jordan-Chevalley decompositon of $\rho(c)$. Let $\tilde{\g}=\g\oplus \mC c_{1}$ be a direct sum of $\g$ and a one-dimensional ideal $\mC c_{1}$. Define $\tilde{\rho}(x+kc_{1})=\rho(x)+kc_n$, for any $x\in\g$, $k\in\mC$. One can easily show that $\tilde{\rho}$ is a representation of $\tilde{\g}$. For a generic point $v\in V$, i.e., $\rho(\g)v=V$, $\tilde{\g}_v=\{x\in\tilde{\g}|\tilde{\rho}(x)v=0\}$ must be one-dimensional with a basis $x_{0}+c_{1}$ for some $x_{0}\in\g$.

Let $\g'=\s\oplus\mC c_{1}$. Then we will show that $x_{0}+c_{1}\in\g'$, i.e., $x_{0}\in\s$.

It is enough to show that the triplet $(\g',\pi,V)$ is not a $\PV$. Otherwise, $\g'\dot +V^*$ is a Frobenius Lie algebra. Noticing that $\s$ is unimodular, and $\tr\pi(c_{1})=0$, we have that $\g'\dot+V^*$ is a unimodular Frobenius Lie algebra and it must be solvable, which contradicts the fact that $\s$ is perfect. Since $\tilde{\g}_v$ is one-dimensional and $\dim\g'=\dim\tilde{\g}-1$, one deduces that $\tilde{\g}_v\subseteq\g'$.

Now consider the subalgebra $\ff=\s\oplus \mC(x_0-c_{1})$. Since $\ff\dot+\tilde{\g}_v=\tilde{\g}$, the triplet $(\ff,\pi,V)$ is a $\PV$. It is easy to see that the triplet $(\ff,\pi,V)$ is isomorphic to $(\g,\pi,V)$.

The proof of the converse statement is easy.
\end{proof}

The above Lemma is useful as we will see in the following example. The same result can be found in \cite[P.~13, Lemma 10]{Bur1996}, but our argument is simpler.

\begin{ex} Let $\g$ be a left-symmetric algebra with $\fgl(2)$ as the adjacent Lie algebra. We may assume that $L_c$ is semisimple, where $c=I_2$ is in the center of $\g$. Since the left-regular representation is completely reducible as an $\fsl(2)$-module, it is isomorphic to either $V(4)$ or $V(2)\otimes V(2)$. Since $L_c$ commutes with the action of $\fsl(2)$, in the first case, $L_c$ is a scalar, which we may assume to be 1. In the second case, identifying $V(2)\otimes V(2)$ with $M(2,2)$, the action of $\fsl(2)$ is the left-multiplication and $L_c$ is a right-multiplication by a diagonal $2\times 2$ matrix $A$ with nonzero trace. Up to a scalar, we have $A$ has one of the forms
 \[(1)\ I_4,\ \ \ (2)\ \left(
    \begin{array}{cc}
      1 & 0 \\
      0 & \lambda \\
    \end{array}
  \right)\textrm{ with $|\lambda|\leq 1$ and }\lambda\neq -1.\]
Thus one can easily check that the \'{e}tale $\PV$ is equivalent to one of the following triplets:

(1) $(\fsl(2)\oplus\fgl(1),3\Lambda_{1}\otimes\Box,V(4))$;

(2) $(\fsl(2)\oplus\fgl(1),\Lambda_{1}\otimes\tau,V(2)\otimes V(2))$, $\tau(c)=\diag(1,\lambda)$.

The non-completely reducible cases are deformations of the above and we just need to find some nonzero nilpotent matrix which commutes with $L_c$ (and the action of $\fsl(2)$). It is easy to see that we have a third case:
$$(3)\ \left(
    \begin{array}{cc}
      1 & 1 \\
      0 & 1 \\
    \end{array}
  \right),$$
which is a deformation of case (2) when $\lambda=1$. Note that when $\lambda=1$ in case (2), the left-symmetric product is the usual matrix multiplication.

\end{ex}

The above example can be easily generalized. For $\fgl(n)=\fgl(1)\oplus\fsl(n)$, define a triplet
$$(\fsl(n)\oplus\fgl(1),\Lambda_{1}\otimes\tau,V(n)\otimes V(n))\ (\tr \tau(\fgl(1))\not\equiv 0).$$
It is easy to check this triplet is an \'{e}tale $\PV$. Furthermore, we have

\begin{thm} \label{thm-sc1}
Let $\g=\g_s\oplus\fgl(1)$ and let $(\g,\rho,V)$ be an \'{e}tale $\PV$.
Then $(\g,\rho,V)$ is equivalent to one of the following triplets:

(1) $(\fsl(n)\oplus\fgl(1),\Lambda_{1}^{(*)}\otimes\tau,V(n)\otimes V(n))$, where $\tau(c)=J$ and $J$ is a Jordan canonical form with $\tr(J)\neq 0$;

(2) $(\fsl(2)\oplus\fgl(1),3\Lambda_{1}\otimes\Box,V(4))$.
\end{thm}

The proof of the Theorem will occupy the rest of this section. Thanks to Lemma~\ref{lem-semisimple}, we just need to determine all completely reducible \'{e}tale $\PV$s. It is easy to see that those $\PV$s are direct sum of irreducible $\PV$s, which are classified by Sato and Kimura \cite{SK1977}.

\begin{prop}[{\cite[\S 7]{SK1977}}]\label{prop6.7}
Let $(\g_{s}\oplus \fgl(1),\rho,V)$ be a nontrivial irreducible $\PV$, where $\g_s$ is simple and $\fgl(1)$ is $1$-dimensional center, and let $\fh$ be the generic isotropy subalgebra. Then it is equivalent to one of the following triplets, where $\Lambda^{(\ast)}$ means $\Lambda$ or $\Lambda^*:$

1. Regular $\PV$s.
\begin{enumerate}
\item $(\fgl(1),\Lambda_{1},V(1))$, $\fh=\{0\};$
\item $(\fgl(n),2\Lambda_{1}^{(\ast)},V(n(n+1)/2)),(n \geq 2),\ \fh\cong\fso(n);$
\item $(\fgl(2m),\Lambda_{2}^{(\ast)},V(m(2m-1))),(m\geq 3),\ \fh\cong \fsp(m);$
\item $(\fgl(2),3\Lambda_{1},V(4)),\ \fh\cong \{0\};$
\item $(\fgl(6),\Lambda_{3},V(20)),\ \fh\cong\fsl(3)\oplus \fsl(3);$
\item $(\fgl(7),\Lambda_{3}^{(\ast)},V(35)),\ \fh\cong \operatorname{G}_2;$
\item $(\fgl(8),\Lambda_{3}^{(\ast)},V(56)),\ \fh\cong \fsl(3);$
\item $(\fso(7)\oplus\fgl(1),\spin\otimes\Box,V(8)),\ \fh\cong \operatorname{G}_2;$
\item $(\fso(9)\oplus\fgl(1),\spin\otimes\Box,V(16)),\ \fh\cong \fso(7);$
\item $(\fso(11)\oplus\fgl(1),\spin\otimes\Box,V(32)),\ \fh\cong \fsl(5);$
\item $(\fso(12)\oplus\fgl(1),\spin^{\pm}\otimes\Box,V(32)),\ \fh\cong \fsl(6);$
\item $(\fso(14)\oplus\fgl(1),\spin^{\pm}\otimes\Box,V(64)),\ \fh\cong \operatorname{G}_2\oplus \operatorname{G}_2;$
\item $(\fsp(3)\oplus\fgl(1),\Lambda_{3}\otimes\Box,V(14)),\ \fh\cong \fsl(3);$
\item $( \operatorname{G}_2\oplus\fgl(1),\Lambda_2\otimes\Box,V(7)),\ \fh\cong \fsl(3);$
\item $( \operatorname{E}_{6}\oplus\fgl(1),\Lambda_1\otimes\Box,V(27)),\ \fh\cong \operatorname{F}_4;$
\item $( \operatorname{E}_{7}\oplus\fgl(1),\Lambda_1\otimes\Box,V(56)),\ \fh\cong \operatorname{E}_6.$
\end{enumerate}

2. Nonregular $\PV$s.
\begin{enumerate}
\item $(\fgl(n),\Lambda_{1}^{(\ast)},V(n));$
\item  $(\fgl(2m+1),\Lambda_{2}^{(\ast)},V(m(2m+1))),(m\geq 2);$
\item $(\fso(n)\oplus\fgl(1),\Lambda_{1}\otimes\Box,V(n))$, $\fh\cong\fso(n-1)\oplus\fgl(1);$
\item $(\fso(10)\oplus\fgl(1),\spin^{\pm}\otimes\Box,V(16));$
\item $(\fsp(n)\oplus\fgl(1),\Lambda_{1}\otimes\Box,V(2n))$ $(n\geq2);$
\end{enumerate}
\end{prop}

Let $(\g,\rho,V)$ be a completely reducible  \'{e}tale $\PV$, say $V=V_1\oplus V_2$, where $V_1$ is irreducible. Then $(\g,\rho_1,V_1)$ is one of the triplets in the above Proposition. Therefore $(\fh,\rho_2,V_2)$ is an \'{e}tale $\PV$ by Proposition~\ref{isotropy subgroup}, following which $\fh$ is not semisimple. Thus we only need to consider the $5$ nonregular cases.
Now we deal with them case by case and the condition $\dim V=\dim\g$ is crucial.

\textbf{I. Case (5): $\g\cong\fsp(n)\oplus \fgl(1)$.}

For $n\geq 3$, $V$ is the direct sum of $k$ copies of $V(2n)$, and we may identify $V$ with $V(2n)\otimes V(k)$. Then $\dim V=2kn$. Since $\dim\g=n(2n+1)+1$, it is impossible to have $\dim V=\dim \g$.

For $n=2$, since $\fsp(2)\cong\fso(5)$, $V$ may be the direct sum of some copies of $V(4)$ and $V(5)$, which is also impossible since $\dim\g=11$.

\textbf{II. Cases (3) and (4): $\g\cong \fso(n)\oplus \fgl(1)$, $n>5$.}

If $n\neq 10$, then $V$ is the direct sum of $k$ copies of $V(n)$. Similar argument as case I shows that it is impossible.

For $n=10$, we have $\dim\g=46$, thus $V$ must be the direct sum of $3$ copies of $V(10)$ and $1$ copy of $V(16)$. Easy calculation shows that $(\fgl(1),S^{2}\mu_{1},V(6))$ is not a $\PV$.

%\begin{prop}
%Let $(\GL(1)\times G_{s},\Box\otimes\rho_{1})$ be one of triplets of $(2),(4),(6)-(8),(10),(11),(13)-(16),(18)-(20)$
%in Proposition~\ref{prop6.7}. Then there does not exist nontrivial representation
% $\rho_{1}'$ such that $(\GL(1)\times G_{1},\Box\otimes\rho_{1}+1\otimes\rho'_{1})$ is an \'{e}tale $\PV$.
%\end{prop}

\textbf{III. Cases (1) and (2): $\g\cong\fgl(n)$, $n\geq 3$.}

The direct summand of $(\rho,V)$ must be $(\Lambda_1^{(*)},V(n))$ or $(\Lambda_2^{(*)},V(\frac{m}{2}(m+1)))$ $(n=2m+1)$. We just need to show that $\Lambda_1+\Lambda_1^*,\Lambda_2+\Lambda_1^{(*)}$ do not occur in $\rho$, which is true thanks to the following Lemmas.

\begin{lem}\label{prop4.3.6}
The triplet $(\fgl(n), \Lambda_{1}\oplus \Lambda_{1}^{\ast}$, $V(n)\oplus V(n))$ $(n\geq 3)$
is not a $\PV$.
\end{lem}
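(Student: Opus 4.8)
The plan is to show that $(\GL(n),\Lambda_1\oplus\Lambda_1^\ast,V(n)\oplus V(n))$ fails the dimension criterion for a $\PV$, or equivalently that it carries a nonconstant absolute invariant. Recall $\dim\GL(n)=n^2$ while $\dim(V(n)\oplus V(n))=2n$, so for $n\geq 3$ we have $\dim\GL(n)=n^2>2n$ except we must look more carefully: the relevant obstruction is not a pure dimension count (indeed $n^2\geq 2n$ for $n\geq 2$), but the existence of an absolute invariant. The natural candidate is the pairing: write a point of $V(n)\oplus V(n)$ as $(x,\xi)$ with $x\in V(n)$ and $\xi\in V(n)^\ast$, and set $f(x,\xi)=\langle\xi,x\rangle$. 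Under $g\in\GL(n)$ this transforms as $f(gx,(g^{-1})^t\xi)=\langle(g^{-1})^t\xi,gx\rangle=\langle\xi,g^{-1}gx\rangle=\langle\xi,x\rangle=f(x,\xi)$, so $f$ is an absolute invariant, and it is visibly nonconstant. By Corollary~2.6 (the corollary following the lemma that absolute invariants are constant), the triplet is not a $\PV$.

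First I would state the point-space description: identify $V(n)\oplus V(n)$ with $V(n)\oplus V(n)^\ast$ so that $\Lambda_1\oplus\Lambda_1^\ast$ acts by $g\cdot(x,\xi)=(gx,(g^t)^{-1}\xi)$. Next I would exhibit $f(x,\xi)=\xi(x)$ and verify invariance by the one-line computation above. Then I would note $f$ is clearly a nonconstant rational (indeed polynomial) function on $V(n)\oplus V(n)$. Finally I would invoke the corollary in Section~\ref{section2.4} to conclude that $(\GL(n),\Lambda_1\oplus\Lambda_1^\ast,V(n)\oplus V(n))$ is not a $\PV$.

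One subtlety worth a remark: the hypothesis $n\geq 3$. For $n=2$ one has $\Lambda_1^\ast\cong\Lambda_1$ (the standard representation of $\SL(2)$ is self-dual), so the situation is genuinely different; but the invariant $f$ above still exists for $n=2$, so in fact the triplet is not a $\PV$ for any $n\geq 2$. The reason the lemma is stated only for $n\geq 3$ is presumably that the $n=2$ case appears separately (e.g. it is castling-related to $(\GL(2),2\Lambda_1,V(3))$-type considerations treated in Section~\ref{sectionII}); I would simply prove the statement as given, since the absolute-invariant argument covers it uniformly.

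I do not expect a serious obstacle here: the only thing to be careful about is making sure the contragredient action is written correctly (so that the contraction really is invariant and not merely relatively invariant), and confirming that the construction genuinely produces an \emph{absolute} invariant (character $\chi=1$) rather than a relative invariant with nontrivial character — which it does, since the determinant factors cancel. An alternative, if one prefers not to invoke the corollary, is a direct tangent-space computation: at a generic point $(x,\xi)$ the image $\{(d\rho(A)x, -A^t\xi): A\in\fgl(n)\}$ lies in the hyperplane $\{(y,\eta):\eta(x)+\xi(y)=0\}$ of $V(n)\oplus V(n)$, hence has dimension at most $2n-1<2n$, so the orbit is not open. Either route is short; I would present the absolute-invariant version as the cleaner one.
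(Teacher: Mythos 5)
Your proof is correct and matches the paper's argument exactly: both exhibit the contraction $f(x,\xi)=\xi(x)$ (the paper writes it as $X^tY$) as a nonconstant absolute invariant and then invoke the corollary that a $\PV$ has only constant absolute invariants. The extra remarks about $n=2$ and the tangent-space alternative are sound but not needed.
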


\begin{proof}
Let $X_{0}=(1,0,\ldots,0)^{T}$. Then the generic isotropy subalgebra of $(\fgl(n),\Lambda_{1},V(n))$ is
$\g_{X_{0}}=(0,A)$, where $A\in M(n,n-1)$. But for any $v\in V(n)$, $\dim \Lambda_1^*(\g_{X_{0}})v\leq n-1$,
hence $(\g_{X_{0}}, \Lambda_{1}^{\ast},V(n))$ $(n\geq 3)$ is not a $\PV$.
%We identify $V(n)\oplus V(n)$ with $V=M(n,1)\oplus M(n,1)$.
%Then the action $\Lambda_{1}\oplus \Lambda_{1}^{\ast}$ is given by
%$(X,Y)\mapsto(\alpha AX,(\alpha A^{T})^{-1}Y)$ for $\alpha\in\fgl(1),A\in \SL(n)$ and $x=(X,Y)\in V$. Hence $f(x)=(X^{T}Y)$ is an absolute invariant. By substituting $X=Y=(1,0,\ldots,0)^{T}$, we
%have $f(x)=1$, while $f(x)=0$ for $X=(1,0,\ldots,0)^{T},Y=(0,1,\ldots,0)^{T}$. Thus $f(x)$
%is a nonconstant absolute invariant, and it is not a $\PV$.
\end{proof}

\begin{rem}
From the proof one can easily see that $\Lambda_1\oplus\Lambda_1^*$ may be $\fsl(n)\oplus\fgl(1)^2$-prehomogeneous for suitable $\fgl(1)^2$ action, which will be used in the last section.
\end{rem}

Note that if $\g$ has $2$-dimensional center, the result will be different:

\begin{rem} The triplet $(\fsl(n)\oplus\fgl(1)^2,\Lambda_{1}\oplus \Lambda_{1}^{\ast},V(n)\oplus V(n))$ $(n\geq 3)$
is a $\PV$. Here $\fgl(1)^2$ acts on each summand with natural multiplication.
\end{rem}

The above observation will be used in the next section when we deal with reductive Lie algebras with multi-dimensional center.

\begin{lem}\label{lem6.171}
The triplet
$(\fgl(2m+1), \Lambda_{2}\oplus\Lambda_{1},V(m(2m+1))\oplus V(2m+1))\ (m\geq 2)$
is a $\PV$ and its generic isotropy subalgebra is $\fsp(m)$.
\end{lem}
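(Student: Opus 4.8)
The plan is to realize the representation concretely on skew-symmetric matrices and row vectors, exhibit an explicit candidate generic point, and compute its isotropy subalgebra directly. Write $V = V(m(2m+1))\oplus V(2m+1)$ as $\{(X,y) \mid X\in M(2m+1,2m+1),\ X^t=-X,\ y\in M(2m+1,1)\}$, with $\GL(2m+1) = \GL(1)\times\SL(2m+1)$ acting by $(\alpha,A)\cdot(X,y) = (\alpha^{a}AXA^t,\ \alpha^{b}Ay)$ for suitable characters determined by the $\GL(1)$-weights on $\Lambda_2$ and $\Lambda_1$; at the infinitesimal level $d\rho(Z,\lambda)(X,y) = (ZX+XZ^t+\lambda a X,\ Zy+\lambda b y)$ for $Z\in\fsl(2m+1)$. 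Since $\dim\GL(2m+1) = (2m+1)^2 = m(2m+1)+(2m+1)+2m^2 = \dim V + 2m^2+1$, and $\dim\Sp(m) = 2m^2+m$, a dimension count shows that prehomogeneity with generic isotropy $\Sp(m)$ would force the isotropy subalgebra to have dimension $2m^2+1$; but $\dim\fsp(m)=2m^2+m\neq 2m^2+1$ in general, so I must be careful: the correct bookkeeping is that the $\Lambda_2$-part already has a one-dimensional surplus (cf. Lemma~\ref{1SL2m+1}), so adding the extra $V(2m+1)$ summand should cut the isotropy down by exactly $2m$ from the $\Sp(m)$ appearing for $\SL(2m+1)$ acting on skew forms alone, landing on a group that is again isomorphic to $\Sp(m)$ after absorbing the $\GL(1)$-freedom.

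The cleanest route is via Proposition~\ref{isotropy subgroup}: the triplet $(\GL(2m+1),\Lambda_2\oplus\Lambda_1,\,\cdot\,)$ is a $\PV$ if and only if $(\GL(2m+1),\Lambda_2,V(m(2m+1)))$ is a $\PV$ and the restriction of $\Lambda_1$ to the generic isotropy group of the latter is a $\PV$ on $V(2m+1)$. For odd size $2m+1$, a nondegenerate skew form has a one-dimensional kernel, so the generic isotropy group of $(\GL(2m+1),\Lambda_2)$ is (up to the central $\GL(1)$) the stabilizer of a corank-one skew form, which is an extension of $\Sp(m)$ by the unipotent radical fixing the kernel line; in particular it acts on $V(2m+1)$ with the kernel line as a distinguished one-dimensional invariant subspace and acts transitively on the complement of a hyperplane in the quotient. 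One then checks that this induced action on $V(2m+1)$ is prehomogeneous, with generic stabilizer exactly $\Sp(m)$ (the $\Sp(m)$ now being the stabilizer inside the bigger group of a chosen vector transverse to the kernel line, with the $\GL(1)$ scaling used up to normalize that vector).

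Concretely, I would take the generic point $X_0$ from the proof of Lemma~\ref{1SL2m+1} — the standard corank-one alternating matrix $\begin{pmatrix}0 & I_m & 0\\ -I_m & 0 & 0\\ 0&0&0\end{pmatrix}$ — together with $y_0 = e_{2m+1}$, the basis vector spanning the kernel of $X_0$. The isotropy subalgebra of $X_0$ alone inside $\fsl(2m+1)\oplus\fgl(1)$ is block-structured with an $\fsp(m)$-block acting on the first $2m$ coordinates, a diagonal $\GL(1)$-compatibility entry, and an off-diagonal block of size $2m$ accounting for maps into the kernel line; imposing in addition $Zy_0+\lambda b y_0 = 0$ kills precisely the $2m$-dimensional off-diagonal block and pins the $\GL(1)$-parameter, leaving exactly the $\fsp(m)$-block. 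A short direct verification gives $\dim\fg_{(X_0,y_0)} = 2m^2+m = \dim\fsp(m)$, and comparing with $\dim\GL(2m+1) - \dim V = (2m+1)^2 - \big(m(2m+1)+(2m+1)\big) = 2m^2+m$ confirms both prehomogeneity and that the generic isotropy subgroup is $\Sp(m)$.

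The main obstacle is the bookkeeping of the $\GL(1)$-weights $a,b$: one must confirm that the single central parameter $\lambda$ can simultaneously satisfy the normalization coming from the $X_0$-equation and the $y_0$-equation without forcing $\lambda=0$ prematurely (which would drop a dimension and break the count) — equivalently, that the character by which $\GL(1)$ acts on the $\Lambda_1$-factor is genuinely nontrivial on the relevant torus, exactly as in the hypothesis $\Det(\tau_1(\GL(1)))\not\equiv 1$ used elsewhere. Once the weight compatibility is checked, the block computation of $\fg_{(X_0,y_0)}$ is routine, being a mild variant of the matrix computation already carried out in Lemma~\ref{1SL2m+1}.
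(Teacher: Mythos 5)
Your proof is correct and follows essentially the same route as the paper's: pick the standard corank-one alternating matrix $X_0$ together with the kernel vector $e_{2m+1}$ as the candidate generic point and compute the isotropy subalgebra of $\fgl(2m+1)$ at $(X_0,e_{2m+1})$ directly, obtaining $\fsp(m)$ embedded as the upper-left $2m\times 2m$ block, with $\dim\fsp(m)=m(2m+1)=\dim\GL(2m+1)-\dim V$. Two small points do not affect the conclusion: the early count ``$\dim V + 2m^2+1$'' is an arithmetic slip (the correct difference, which you recover later, is $2m^2+m$), and the closing concern about the $\GL(1)$-weight is unfounded --- $\lambda=0$ \emph{is} forced (compatibility of the trace-zero condition with $B'+\lambda I\in\fsp(m)$ gives $(2m+1)\lambda=0$) and that is exactly consistent with the dimension count, since the $\Lambda_1$-equation $Ae_{2m+1}=0$ cuts the $(2m^2+3m+1)$-dimensional stabilizer of $X_0$ alone by precisely $2m+1$ (the free off-diagonal column, the last diagonal entry, and the central parameter), landing on $2m^2+m$; working directly in $\fgl(2m+1)$ as the paper does sidesteps this bookkeeping altogether.
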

\begin{proof}
We identify $V=V(m(2m+1))\oplus V(2m+1)$ with $(X_{1},X_{2})\in \{M(2m+1,2m+1)\oplus M(2m+1,1)|X_{1}^{T}=-X_{1}\}$. Then
$\rho(A,C)X=(AX_{1}+X_{1}A^{T},AX_{2})$, where $X=(X_{1},X_{2})\in V$ and
$A\in \fgl(2m+1)$.

Take
 \[X_{0}=\left(\left(
                  \begin{array}{ccc}
                    0 &  I_{m} &0\\
                    -I_{m} & 0 & 0 \\
                    0& 0 & 0 \\
                  \end{array}
                \right)
,(0,0,\ldots,0,1)^{T}
 \right).\]
Easy calculation shows that the isotropy subalgebra $\fg_{X_{0}}$ is $\fsp(m)$.
Since $\dim \g_{X_{0}}=m(2m+1)=\dim\fgl(2m+1)-\dim V$, it is a $\PV$.
\end{proof}

\begin{lem}\label{lem5.9}
The triplet $(\fgl(2m+1), \Lambda_{2}\oplus\Lambda_{1}^{\ast},V(m(2m+1))\oplus V(2m+1))\ (m\geq 2)$
is not a $\PV$.
\end{lem}
\begin{proof}
The generic isotropy subalgebra of $(\fgl(2m+1), \Lambda_{2},V(m(2m+1)))$ is
\[\fh=\left\{\left(
      \begin{array}{cc}
        A & \beta \\
        0 & c \\
      \end{array}
    \right)\in\fgl(2m+1), A\in\fsp(m)\right\}.\]
Let $X_{0}=\left(
             \begin{array}{c}
               X_{1} \\
               X_{2} \\
             \end{array}
           \right)\in M(2m+1,1)$, where $X_1\in M(2m,1)$. Then the action of $\fh$ on $X_{0}$ is given by
\[-\left(
    \begin{array}{cc}
      A^{T} & 0 \\
      \beta^{T} & c \\
    \end{array}
  \right)\left(
           \begin{array}{c}
             X_{1} \\
             X_{2} \\
           \end{array}
         \right)=-\left(
                   \begin{array}{c}
                     A^{T}X_{1} \\
                     \beta^{T}X_{1}+cX_{2} \\
                   \end{array}
                 \right)
.\]
Hence the action of $\fh$ on $X_{1}$ is $(\fsp(m),\Lambda_{1},V(2m))$, which is not
a $\PV$ for $m\geq2$. Thus the triplet
is not a $\PV$.
\end{proof}

\section{\'{E}tale PVs for reductive Lie algebras with simple Levi factors II}\label{sec6}

In this section we will discuss the \'{e}tale $\PV$s for reductive Lie algebras $\g=\g_s\oplus\fgl(1)^k$, where $\g_s$ is simple. This case is difficult because representations of $\g$ are not necessarily completely reducible and the technique we used in the previous section can be applied only when the center is 1-dimensional.

For any $\PV$ $(\g,\rho,V)$ , $V$ is completely reducible as a $\g_s$-module:
\begin{equation}\label{eq-prim-dec}
V=m_1V_1\oplus\cdots\oplus m_lV_l,
\end{equation}
where $V_i$ are mutually nonisomorphic irreducible $\g_s$-modules with multiplicity $m_i$. Thus $m_iV_i$ are $\g$-submodules and can be identified with $V_i\otimes V(m_i)$. Therefore, we may write
\begin{equation}\label{eq-decom}
(\g,\rho,V)=(\g_{s}\oplus\fgl(1)^k,\sum_{i=1}^{l}\sigma_{i}\otimes\mu_i,\bigoplus_{i=1}^lV_i\otimes V(m_i)).
\end{equation}

We have the following easy but useful observation.

\begin{lem}\label{lem-pre-sub}
Let $V_i$ be as above. Then

$(1) \ V_i\otimes V(m_i)$ is $\g$-prehomogeneous;

$(2)$ There exists a full flag $W_1\subset W_2\subset\cdots\subset W_{m_i}=V(m_i)$, which is $\fgl(1)^k$ invariant. Identifying $V(r)$ with $V(m_i)/W_{m_i-r}$, $1\leq r\leq m_i$, we have that $V_i\otimes V(r)$ is $\g_s\oplus\fgl(r)$-prehomogeneous.
\end{lem}

In particular, $V_i$ is $\g_s\oplus\fgl(1)$-prehomogeneous and must be in the list of Proposition~\ref{prop6.7}. Therefore one easily has the following.

\begin{cor}
  In the decomposition $(\ref{eq-prim-dec})$, we have $l\leq 4$. In particularly, for $\g_s=\fsp(n)$ $(n>3)$, $\operatorname{G}_2$, $\operatorname{E}_6$, $\operatorname{E}_7$, we have $l=1$.
\end{cor}

\subsection{Exceptional cases}

If $\g_s$ is exceptional, then $l=1$. Comparing the dimension of the Lie algebra and the irreducible $\PV$s in Proposition~\ref{prop6.7}, we have $m_1\leq 3$.
\begin{lem}
Let $\g_{s}$ be an exceptional simple Lie algebra. Then there are no \'{e}tale $\PV$s for $\g$.
\end{lem}
\begin{proof} From the list of irreducible $\PV$s in \cite{SK1977}, we know that there are no $\PV$s for $\g_s\oplus\fgl(3)$. %$\operatorname{G}_2\times\fgl(3),\operatorname{E}_{6}\times\fgl(3),\operatorname{E}_{7}\times\GL(2)$.
Thus the assertion follows from Lemma \ref{lem-pre-sub}.
\end{proof}

\subsection{$\g_s=\fsp(n)$}

Note that for $n>3$ we have $l=1$ and $(\sigma_1,V_1)=(\Lambda_1,V(2n))$; for $n=2,3$, we have $l\leq 2$.

\begin{lem}\label{Sp}
Let $n>1$. Assume that $\fgl(1)^k$-representation $(\mu,V(m))$ is faithful. Then the triplet $(\fsp(n)\oplus \fgl(1)^{k},\Lambda_1\otimes\mu,V(2n)\otimes V(m))$ is prehomogeneous only if $k\leq m\leq 3$.
\end{lem}
\iffalse
\begin{proof} If $m\geq 4$, we have $V(2n)\otimes V(4)$ is $\fsp(n)\oplus\fgl(1)^{k}$-prehomogeneous for suitable action of $\fgl(1)^k$ on $V(4)$ by Lemma \ref{lem-pre-sub}. We may assume that the action is faithful, which implies that $k\leq 5$ since the maximum of the dimensions of abelian subalgebras in $\fgl(4)$ is 5. It is easy to see that the generic isotropy subalgebra the $\fsp(n)\oplus\fgl(4)$ action is $\fsp(2)\oplus\fsp(n-2)$ with $\fsp(n-2)<\fsp(n)$, which implies that $\fsp(n-2)$ is contained in the generic isotropy subalgebra for the $\fsp(n)\oplus\fgl(1)^k$ action. Easy calculation shows that $V(2n)\otimes V(4)$ is not $\fsp(n)\oplus\fgl(1)^{k}$-prehomogeneous.
Therefore we have $m\leq 3$ and $k\leq m$ since the abelian subalgebras in $\fgl(m)$ has dimension at most $\left[\dfrac{m^2}{4}\right]+1=m$ for $m\leq 3$.
\end{proof}
\fi
\begin{proof}
Assume that $(\fsp(n)\oplus \fgl(1)^{k},\Lambda_1\otimes\mu,V(2n)\otimes V(m))$ is a $\PV$. Then
$(\fgl(1)^{k},\wedge^{2}\mu,V(\frac{m(m-1)}{2}))$ is a $\PV$ by Proposition~\ref{Sp(n)}, thus we have $k\geq\frac{m(m-1)}{2}$. Since the abelian subalgebras in $\fgl(m)$ has dimension at most $\left[\dfrac{m^2}{4}\right]+1$, we have $\frac{m^{2}}{4}+1\geq k\geq\frac{m(m-1)}{2}$, which implies that $m\leq 3$. For $m\leq 3$ we have $\left[\dfrac{m^2}{4}\right]+1=m$. Thus $k\leq m\leq3$.
\end{proof}

Comparing the dimensions of $V(2n)\otimes V(m)$ and Lie algebra $\g$, we have

\begin{cor}\label{cor-sp-nonpv} If $n\geq 2$, then $(\fsp(n)\oplus \fgl(1)^{k},\Lambda_1\otimes\mu,V(2n)\otimes V(m))$ is not \'{e}tale.
\end{cor}

Therefore, we just need to consider those $\fsp(n)$ which has at least 2 different irreducible $\PV$s, which implies that $n=2$ or $n=3$. For $n=2$, noticing that $(\fsp(2)\oplus\fgl(1),\Lambda_2\otimes\Box,V(5))$ is prehomogeneous since $\fsp(2)\cong\fso(5)$, we have

\begin{lem}\label{SP(2)}
The triplet
$(\fsp(2)\oplus \fgl(1)^{k},\Lambda_{2}\otimes \mu_{1}+\Lambda_{1}\otimes \mu_{2},V(5)\otimes V(m_{1})+V(4)\otimes V(m_{2}))$ is not \'{e}tale for any $\mu_{i}$ $(i=1,2)$.
\end{lem}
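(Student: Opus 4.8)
The statement is of the same flavor as the preceding lemmas in this section: we must show that a reductive Lie algebra with simple Levi factor $\Sp(2)=C_2$ and a decomposable representation built from $\Lambda_2$ and $\Lambda_1$ (plus central twists) never yields a cuspidal $\PV$. The plan is to run the familiar two-step reduction. First I would pin down the number of central parameters $k_1$ allowed on the $\Lambda_2$-summand: since $(\GL(1)\times\Sp(2),\Box\otimes\Lambda_2,V(5))$ is already an irreducible reduced $\PV$ with generic isotropy $\SO(4)\cong\SL(2)\times\SL(2)$, and since $\Lambda_2$ of $\Sp(2)$ is (up to the scalar) the vector representation of $\SO(5)$, the triplet $(\Sp(2)\times\GL(1)^2,\Lambda_2\otimes\tau_1,V(5)\otimes V(2))$ is not a $\PV$ by a dimension count (one checks $\dim(\Sp(2)\times\GL(1)^2)=12<10=\dim(V(5)\otimes V(2))$ fails the wrong way — rather $12\geq10$, so instead I would invoke Lemma~\ref{SO} after the isomorphism $(\Sp(2),\Lambda_2)\simeq(\SO(5),\Lambda_1)$, which forces $k_1=1$ and $d'_1=1$).

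Next, with $k_1=1$ fixed, I would use Proposition~\ref{isotropy subgroup}: the triplet $(\Sp(2)\times\GL(1)^{1+k_2},\Lambda_2\otimes\Box+\Lambda_1\otimes\tau_2,V(5)+V(4)\otimes V(d'_2))$ is a $\PV$ iff the restriction of $\Lambda_1\otimes\tau_2$ to the generic isotropy subgroup $H$ of $(\GL(1)\times\Sp(2),\Box\otimes\Lambda_2,V(5))$ is a $\PV$. That isotropy group is $\SO(4)\cong\SL(2)\times\SL(2)$, and under it the vector representation $\Lambda_1$ of $\Sp(2)$ restricted to $H$ decomposes as $V(4)=V(2)\otimes V(2)$, i.e. the outer tensor $\Lambda_1\boxtimes\Lambda_1$ of $\SL(2)\times\SL(2)$. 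So I would reduce to asking whether $(\SL(2)\times\SL(2)\times\GL(1)^{k_2},\Lambda_1\otimes\Lambda_1\otimes\tau_2,V(2)\otimes V(2)\otimes V(d'_2))$ can be a cuspidal $\PV$; by Lemma~\ref{nec3} (or a direct count, $\dim(\SL(2)\times\SL(2))=6$ versus $\dim V=4d'_2$, with only $[\,(d'_2)^2/4\,]+1$ central parameters available) this forces $k_2=d'_2=1$, and then $\dim=6+1\cdot 1$ comparisons show cuspidality already fails, or the resulting small triplet is seen directly not to be a $\PV$ by exhibiting a nonconstant absolute invariant (the determinant of the $2\times2$ matrix formed by the $V(4)=M(2,2)$ coordinate, which is invariant up to character and hence gives an absolute invariant after dividing by the appropriate power of the $\Lambda_2$-relative invariant).

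I would also separately dispose of the degenerate orderings: if instead the $\Lambda_1$-part carries the extra central factors and the $\Lambda_2$-part is plain, the same isotropy-restriction argument applies verbatim; and if $d'_2\geq 2$ one is already beyond the dimension bound. Finally I would remark that when $\tau_1$ or $\tau_2$ is allowed to be a nontrivial (non-scalar) representation of $\GL(1)^{k_i}$, the Jordan–Chevalley trick of Theorem~\ref{thm6.1} lets us replace it by its semisimple part without changing prehomogeneity, so it suffices to treat diagonalizable central actions, which is what the dimension counts above assume.

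The main obstacle I anticipate is the bookkeeping in the middle step: correctly identifying that $\Lambda_1|_H = \Lambda_1\boxtimes\Lambda_1$ under $H\cong\SL(2)\times\SL(2)\hookrightarrow\Sp(2)$ and then verifying that no admissible central twist $\tau_2$ on $V(2)\otimes V(2)$ — equivalently no choice of $k_2\le 2$ and characters — produces the equality $\dim(\text{group})=\dim(\text{space})$ with an actual open orbit. This is where one must be careful to rule out the borderline dimension-matching case by producing the explicit absolute invariant rather than relying on a count alone, exactly as in the proof of Lemma~\ref{SL11}.
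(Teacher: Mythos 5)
Your plan diverges from the paper's: the paper first eliminates $d'_2\geq2$ by citing a non-$\PV$ result from \cite{KKI1988} and then rules out the remaining case $d'_1=2,\,k_1=3$ because $\fgl(2)$ has no $3$-dimensional abelian subalgebra, whereas you force $k_1=d'_1=1$ through the identification $(\Sp(2),\Lambda_2)\simeq(\SO(5),\Lambda_1)$ and Lemma~\ref{SO}, and then try to kill the $\Lambda_1$-summand on the isotropy group. The first step and the overall architecture are fine, but the second step contains a concrete error in the branching. The restriction of $\Lambda_1$ of $\Sp(2)$ to the generic isotropy $H\cong\SL(2)\times\SL(2)$ is \emph{not} $\Lambda_1\boxtimes\Lambda_1$. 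With $\omega=e_1\wedge e_2+e_3\wedge e_4$ and the anisotropic generic point $\xi=e_1\wedge e_2-e_3\wedge e_4\in V(5)$, the stabilizer is $\Sp(\langle e_1,e_2\rangle)\times\Sp(\langle e_3,e_4\rangle)$ embedded block-diagonally, and $V(4)$ restricts to $\Lambda_1\boxtimes 1\oplus 1\boxtimes\Lambda_1$; equivalently, $V(4)$ is the spin representation of $\Spin(5)$, whose branching to $\Spin(4)$ is the sum of the two half-spin representations, not the vector representation $V(2)\boxtimes V(2)$. So Lemma~\ref{SO} and Proposition~\ref{SO(n)} cannot be applied to the reduced triplet.

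There is a second, independent gap even within your own framing: after the reduction, cuspidality demands $6+k_2=4d'_2$, and neither the direct count ($k_2\leq[(d'_2)^2/4]+1$) nor Lemma~\ref{nec3} (which only applies when $4d'_2\leq 7$) excludes the borderline case $d'_2=2,\,k_2=2$, which is exactly the case the paper invokes \cite{KKI1988} to handle. With the branching corrected, that case does fall to the absolute-invariant idea you sketch at the end, made precise as follows: identify a point of $(V(2)\oplus V(2))\otimes V(2)$ with $(X_1,X_2)\in M(2,2)\oplus M(2,2)$; the action of $(A,B,C)\in\SL(2)\times\SL(2)\times\GL(1)^{2}\subset\SL(2)\times\SL(2)\times\GL(2)$ is $(AX_1C^t,BX_2C^t)$, so $\det(X_1)/\det(X_2)$ is a nonconstant absolute invariant (since $\det A=\det B=1$), and the triplet is not a $\PV$ by Corollary~2.6. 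Substituting the correct branching and this invariant for your Lemma~\ref{nec3} step repairs the proof and yields an argument self-contained in the paper's own toolkit.
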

\begin{proof}
If $m_1>1$, then $V(5)\otimes V(2)+V(4)$ is $\fsp(2)\oplus\fgl(1)^k$-prehomogeneous. The image of $\fgl(1)^k$ is an abelian subalgebra of $\fgl(2)\oplus\fgl(1)$ and is at most $3$-dimensional. Therefore $\dim\fsp(2)+3=13<\dim(V(5)\otimes V(2)+ V(4))=14$. If $m_2>1$, then, by the same idea as in Lemma \ref{lem-pre-sub}, $V(5)+V(4)\otimes V(2)$ is prehomogeneous for $\fsp(2)\oplus \fgl(1)^{1+2}$ and hence for $\fsp(2)\oplus \fgl(2)\oplus\fgl(1)$, which contradicts the result in \cite[P.~391, Lemma 2.18]{KKIY1988}. Therefore $m_2=1$. Then the assertion follows.
\end{proof}

\begin{lem}
The triplet
$(\fsp(3)\oplus\fgl(1)^{k},\Lambda_{3}\otimes \mu_{1}+ \Lambda_{1}\otimes \mu_{2},V(14)\otimes V(m_{1})+ V(6)\otimes V(m_{2}))$ is not \'{e}tale for any $\mu_{i}$ $(i=1,2)$.
\end{lem}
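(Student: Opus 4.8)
The strategy mirrors the arguments already used for the companion lemmas on $\Sp(n)$: first pin down the admissible pieces attached to the $\Lambda_3$-component by a dimension-and-invariant count, then reduce the surviving configurations to a strictly smaller triplet that has already been ruled out. Recall that $\dim \Sp(3)=21$ and $\dim V(14)=14$, so for the triplet $(\GL(1)^{k_1}\times\Sp(3),\Lambda_3\otimes\tau_1,V(14)\otimes V(d_1'))$ to be a $\PV$ we already need, by Proposition~\ref{dimension}, $k_1+21\ge 14 d_1'$. Combined with the center bound $k_1\le\lfloor d_1'^2/4\rfloor+1$ this immediately forces $d_1'=1$ (for $d_1'=2$ we would need $k_1\ge 7$ but $k_1\le 2$). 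So $\tau_1=\Box$ and the first summand is the irreducible $\PV$ $(\GL(1)\times\Sp(3),\Box\otimes\Lambda_3,V(14))$ of Proposition~\ref{prop6.7}(16), whose generic isotropy subgroup is $H\cong\SL(3)$.

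Next I would invoke Proposition~\ref{isotropy subgroup}: the triplet $(\Sp(3)\times\GL(1)^{k_1+k_2},\Lambda_3\otimes\Box+\Lambda_1\otimes\tau_2,V(14)+V(6)\otimes V(d_2'))$ is a $\PV$ if and only if, restricting to $H\cong\SL(3)$, the triplet $(\SL(3)\times\GL(1)^{k_2},(\Lambda_1\otimes\tau_2)|_H,\ldots)$ is a $\PV$. Under $H\cong\SL(3)$ the six-dimensional space $V(6)$ (the defining representation of $\Sp(3)$) decomposes as $V(3)\oplus V(3)^{\ast}$, and the action of $H$ is $X\mapsto(AX_1,(A^{-1})^t X_2)$ — exactly the situation already treated in the unnumbered lemma preceding Theorem~\ref{prop4.3.13} (the $(\GL(1)\times\Sp(3),1\otimes\Lambda_1+\Box\otimes\Lambda_3,V(6)\oplus V(14))$ case) and in Lemma~\ref{prop4.3.6}. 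There one gets the nonconstant absolute invariant $f=\langle X_1,X_2\rangle$, so the restricted triplet has a nonconstant absolute invariant whenever both $X_1$- and $X_2$-parts appear, i.e. whenever $\Lambda_1\otimes\tau_2$ restricted to $H$ meets both $V(3)$ and $V(3)^\ast$ nontrivially. For a cuspidal $\PV$ we moreover need the dimensions to balance, $k_1+k_2+21=14+6d_2'$, i.e. $k_2=6d_2'-8$, which with $k_2\le\lfloor d_2'^2/4\rfloor+1$ forces $d_2'=1$ and $k_2=-2$, an immediate contradiction; so in fact the cuspidality condition alone kills every case once $d_1'=1$ is established.

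Thus the cleanest route is: (1) dimension count on the first summand forces $d_1'=1$, $\tau_1=\Box$, $H\cong\SL(3)$; (2) the cuspidality equation $\dim G=\dim V$ then reads $k_1+k_2+21=14+6d_2'$, and together with the universal bounds $k_1\le 1$ (the center of $\GL(1)$ attached to an irreducible $\Lambda_3$ is one-dimensional, already used in step~1) and $k_2\le\lfloor d_2'^2/4\rfloor+1$ this system has no solution in nonnegative integers; hence no cuspidal $\PV$ exists. I expect the only subtlety is making step~(1) airtight: one must check that $\tau_1$ cannot be a nontrivial Jordan block giving extra center dimension, but this is exactly the content of the classification quoted at the start of Section~\ref{section7.1} (for $C_n$, an irreducible $\Lambda_3$ with $n=3$ forces $l=1$ and a single scalar $\Box$), so no genuine obstacle remains. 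If one prefers an invariant-theoretic finish in the borderline $d_2'$ cases rather than the raw count, the $H\cong\SL(3)$ reduction together with $f=\langle X_1,X_2\rangle$ as above supplies it, just as in the proof of the $\Sp(3)$-lemma before Theorem~\ref{prop4.3.13}.
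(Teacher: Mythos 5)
Your step~(1), forcing $d_1'=1$ and $\tau_1=\Box$, is fine (and is an alternative to the paper's observation that $(\GL(2)\times\Sp(3)\times\GL(1),\Lambda_1\otimes\Lambda_3\otimes1+1\otimes\Lambda_1\otimes\Box)$ is not a $\PV$). However, your step~(2) does not close the lemma. The cuspidality equation is $k_1+k_2+21=14+6d_2'$, i.e.\ $k_1+k_2=6d_2'-7$; with $k_1\le 1$ and $k_2\le\lfloor d_2'^2/4\rfloor+1$ this requires $6d_2'-9\le\lfloor d_2'^2/4\rfloor$, which \emph{fails} for $2\le d_2'\le 22$ but is \emph{satisfied} for every $d_2'\ge 23$ (for instance $d_2'=23$ gives $129\le 132$). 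So the linear-vs.-quadratic count alone does not ``have no solution in nonnegative integers'': it leaves the whole tail $d_2'\ge 23$ open, and you have not supplied any upper bound on $d_2'$.

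Your proposed invariant-theoretic fallback does not plug the hole either. The function $f(X_1,X_2)$ built from the pairing $V(3)\times V(3)^\ast\to\mC$ is an $H$-invariant, but once $\tau_2$ is nontrivial the attached $\GL(1)^{k_2}$ acts on the multiplicity space $V(d_2')$, and $f$ transforms by $\det(\tau_2)^2$ (it is a \emph{relative}, not absolute, invariant of the restricted triplet $(H\times\GL(1)^{k_2},\,\cdot\,)$). In the paper's companion lemma before Theorem~\ref{prop4.3.13} the $\GL(1)$ acts trivially on the $\Lambda_1$-summand, which is precisely why $f$ is absolute there; that hypothesis is not available here.

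What is actually needed is the constraint $d_2'\le 3$, which the paper obtains from Lemma~\ref{Sp}: applying Proposition~\ref{Sp(n)} to the second summand reduces to $(\GL(1)^{k_2},\wedge^2(\tau_2),V(d_2'(d_2'-1)/2))$ being a $\PV$, whence $k_2\ge d_2'(d_2'-1)/2$, and combined with $k_2\le\lfloor d_2'^2/4\rfloor+1$ this forces $k_2=d_2'\le 3$. With that bound in hand, the cuspidality equation $k_2=6d_2'-8$ has no admissible solution, and only then is the lemma proved. You should insert this $\Lambda^2$-reduction (or cite Lemma~\ref{Sp}) before the final dimension count.
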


\begin{proof}
It is easy to check that $(\fsp(3)\oplus\fgl(1)^{3},\Lambda_{3}\otimes\mu_{1}+\Lambda_{1}\otimes\Box,V(14)\otimes V(2)+ V(6))$ is not a $\PV$. Thus we have $m_{1}=1$ and $\mu_{1}=\Box$. By Lemma~\ref{Sp} we have $m_2\leq 3$ and $k\leq m_2+1$. The assertion follows by comparing the dimensions of the representation of the Lie algebras.
\end{proof}

\subsection{$\g_s=\fso(n)$, $n\geq 6$}

Note that only for $n=7,9,11$, $\fso(n)\oplus\fgl(1)$ has two irreducible $\PV$s: the natural one and the spin representation; for $n=8,10,12,14$, $\fso(n)$ has three 8-dimensional irreducible $\PV$s: the natural one and two half-spin representations. The two half-spins are not equivalent as representations, but equivalent as $\PV$s, and the two half-spins of $\fso(8)$ are equivalent to the natural representation as $\PV$s.

We first consider the case when $l=1$ in the decomposition (\ref{eq-decom}).

\begin{lem}\label{SO}
The triplet
$(\fso(n)\oplus \fgl(1)^{k},\Lambda_1\otimes\mu,V(n)\otimes V(m))$ $(n\geq 3,k\geq 1,m\geq 1)$
is a (faithful) $\PV$ if and only if $k=m=1$. Such $\PV$s are not \'{e}tale.
\end{lem}
\iffalse
\begin{proof}
Consider the action of $\fso(n)\oplus\fgl(2)$ on $V(n)\otimes V(2)$, which is prehomogeneous with the generic isotropy subalgebra $\fso(2)\oplus\fso(n-2)$, where $\fso(n-2)$ is a subalgebra of $\fso(n)$. Then one can easily show that $(\fso(n)\oplus \fgl(1)^2,\Lambda_1\otimes\mu,V(n)\otimes V(2))$ is not a $\PV$. Therefore $m=1$ and hence $k=1$.
\end{proof}\fi
\begin{proof}
The triplet is a $\PV$ if and only if $(\fgl(1)^{k},S^{2}\mu,V(\frac{m}{2}(m+1)))$ is a $\PV$ by
Proposition~\ref{SO(n)}. Then we get $k\geq\frac{m(m+1)}{2}$. Since the abelian subalgebras in $\fgl(m)$ has dimension at most $\left[\dfrac{m^2}{4}\right]+1$, we have $\frac{m^{2}}{4}+1\geq k\geq\frac{m(m+1)}{2}$. Then $k=m=1$.
\end{proof}
\begin{lem} Let $n=7,9,10,11,12,14$. The triplets $(\fso(n)\oplus \fgl(1)^{k},\spin\otimes\mu,V(2^{[\frac{n-1}{2}]})\otimes V(m))$ are not \'{e}tale for any $k,m$.% and they are $\PV$s only for $m=1$.
\end{lem}
\begin{proof}
%  Since $\dim\fso(n)=\frac{n(n-1)}{2}>2^{[\frac{n-1}{2}]}$ for those $n$, we have $m\geq 2$.
By the result classification of irreducible $\PV$s of Sato and Kimura in~\cite{SK1977}, we have that, for $n=9,11,12,14$, $V(2^{[\frac{n-1}{2}]})\otimes V(2)$ is not a $\PV$ for $\fso(n)\oplus\fgl(2)$, thus it is not a $\PV$ for $\fso(n)\oplus\fgl(1)^k$. For $n=7,10$, $V(2^{[\frac{n-1}{2}]})\otimes V(3)$ are $\fso(n)\oplus\fgl(3)$-prehomogeneous, but the generic isotropy subalgebras intersects $\fso(n)$ nontrivially, which implies that $V(2^{[\frac{n-1}{2}]})\otimes V(3)$ is not $\fso(n)\oplus\fgl(1)^3$-\'{e}tale.
\end{proof}

\begin{lem}
The triplet $(\fso(n)\oplus\fgl(1)^{k},\Lambda_{1}\otimes \mu_{1}+\spin^{(\pm)}\otimes \mu_{2} ,V(n)\otimes V(m_{1})+ V(m)\otimes V(m_{2}))$ $(n=7,9,10,11,12,14)$
are not \'{e}tale for any $\mu_{i}$ $(i=1,2)$.
\end{lem}
\begin{proof}
By Lemma~\ref{SO}, we get $m_{1}=1$ and $\mu_{1}=\Box$. Now assume that $(\spin(n)\oplus\fgl(1)^{k},\Lambda_{1}\otimes \Box+\spin\otimes \mu_{2} ,V(n)\otimes V(1)+ V(2^{[\frac{n-1}{2}]})\otimes V(m_{2}))$ is a $\PV$. Then we have $m_{2}\leq3$ for $n=7,10$ and $m_{2}=1$ for $n=9,11,12,14$. One can easily check they are not \'{e}tale $\PV$.
\end{proof}

\begin{lem}\label{triSpin8}
Let
$(\fso(8)\oplus\fgl(1)^k,\Lambda_{1}\otimes \mu_{1}+\spin^+\otimes \mu_{2}+\spin^-\otimes\mu_3,V(8)\otimes V(m_{1})+ V(8)\otimes V(m_{2})+V(8)\otimes V(m_3)$
be a $\PV$. Then we have $m_1,m_2,m_3\leq 1$ and consequently the triplets cannot be \'{e}tale.
\end{lem}
\begin{proof}
Since $(\fso(8),\Lambda_{1},V(8))\simeq (\fso(8),\spin^{\pm},V(8))$. By Lemma~\ref{SO}, we have $m_{1},m_{2},m_{3}\leq 1$. And $\dim\fso(8)=28>24$, from which we deduce that the triplets cannot be \'{e}tale.
\end{proof}

\subsection{$\g_s=\fsl(n)$}

This is the most complicated case, since there are several irreducible $\PV$s for $\fgl(n)$:
$$\Lambda_1^{(*)},\Lambda_2,2\Lambda_1,\Lambda_3^{(*)}\ (n=6,7,8).$$
We will show step by step that only $\Lambda_1^{(*)}$ can occur in \'{e}tale $\PV$s.

Firstly, let us show that $\Lambda_3^{(*)}$ does not occur in \'{e}tale $\PV$s.

\begin{lem}
The triplet $(\fsl(n)\oplus \fgl(1)^{k},\Lambda_{3}\otimes\mu_{1}+\Lambda_{1}^{(\ast)}\otimes\mu_{2},V(\frac{n(n-1)(n-2)}{6})\otimes V(m_1)+ V(n)\otimes V(m_2))$ are not \'{e}tale for $n=6,7,8$.
\end{lem}
\begin{proof}
If the triplet is \'{e}tale, we can easily see that $m_1=1$ since $2\dim\Lambda_3>n^2+1$.

For $n=6$, we must have $m_2\geq 3$ and $V(20)+ V(6)\otimes V(3)$ must be $\fsl(n)\oplus\fgl(1)^{k}$-prehomogeneous. Since the generic isotropy subalgebra of $V(20)$ is $\fsl(3)\oplus\fsl(3)$, we have $V(6)\otimes V(3)$ is $\fsl(3)\oplus\fsl(3)\oplus\fgl(1)^{k-1}$-prehomogeneous. Noticing that $V(6)=V_1\oplus V_2$ $(V_1\cong V_2\cong V(3))$ as $\fsl(3)\oplus\fsl(3)$-representations, one can easily show that this representation is not prehomogeneous.

For $n=7$, the generic isotropy subalgebra of $(\fgl(7),\Lambda_3,V(35))$ is $\mathrm{G}_2$ and $\mathrm{G}_2\oplus\fgl(1)^{k-1}$ has no \'{e}tale $\PV$s.

For $n=8$, since $\dim\fsl(8)=63<\dim\Lambda_3+\dim V(8)\otimes V(2)$, we have $m_2=1$ and $k=1$. But the generic isotropy subalgebra of $(\fgl(7),\Lambda_3,V(35))$ is $\fsl(3)$, whose action on $V(8)$ is not \'{e}tale.
\end{proof}

Secondly, we show that $\Lambda_2^{(*)}$ and $2\Lambda_1^{(*)}$ do not occur in \'{e}tale $\PV$s.

\begin{lem}
  The triplet $(\fsl(n)\oplus\fgl(1)^k,2\Lambda_1\otimes\mu_1+\Lambda_2^{(*)}\otimes\mu_2,V(\frac{n(n+1)}{2})\otimes V(m_1)\oplus V(\frac{n(n-1)}{2})\otimes V(m_2))$ is not \'{e}tale for any $\mu_1,\mu_2$.
\end{lem}
\begin{proof}
  We just need to show that $(2\Lambda_1+\Lambda_2)$ is not $\fgl(n)\oplus\fgl(1)$-prehomogeneous, which follows from that the generic isotropy subalgebra of $\fgl(n)$ action on $2\Lambda_1$ is $\fso(n)$  and $\Lambda_2^{(*)}$ is not $\fso(n)\oplus\fgl(1)$-prehomogeneous since the action of $\fso(n)$ on $\Lambda_2^{(*)}$ is just the adjoint action.
\end{proof}

Therefore, if $2\Lambda_1^{(*)}$ or $\Lambda_2^{(*)}$ occurs in an \'{e}tale $\PV$ for $\fsl(n)\oplus\fgl(1)^k$, then only $\Lambda_1$ and $\Lambda_1^*$ can occur at the same time. Since $2\Lambda_1$ is a regular $\PV$ for $\fgl(n)$, it is easy to get the following

\begin{lem}\label{lem4.6}
Let
$(\fgl(n)\oplus\fgl(1)^{k},2\Lambda_{1}\otimes\mu_{1}+\Lambda_{1}\otimes\mu_{2}+\Lambda_1^*\otimes\mu_3)$
be an \'{e}tale $\PV$. Then it is equivalent to $(\fsl(2)\oplus\fgl(1)^{2},2\Lambda_1\otimes\Box+\Lambda_1\otimes\Box,V(3)+ V(2))$.
\end{lem}
\begin{proof}
It is easy to see that $\mu_1$ is 1-dimensional.
Since the generic isotropy subalgebra of $(\fgl(n),2\Lambda_{1})$ is $\fso(n)$, we must have $(\fso(n)\oplus\fgl(1)^{k-1},\Lambda_1\otimes(\mu_2+\mu_3))$ is \'{e}tale (note that $\Lambda_1\cong\Lambda_1^*$ for $\fso(n)$), which happens if and only if $n=2$ and $\mu_2+\mu_3$ is 1-dimensional, thanks to Lemma~\ref{SO}.
\end{proof}

For $n=2m$, $\Lambda_2$ is a regular $\PV$ for $\fgl(2m)$ with generic isotropy subalgebra $\fsp(m)$.

\begin{lem}\label{SL(2m)}
The triplet
$(\fsl(2m)\oplus \fgl(1)^{k},\Lambda_{2}\otimes\mu_{1}+\Lambda_{1}\otimes\mu_{2}+\Lambda_1^{\ast}\otimes\mu_3)$ ($m\geq 2$)
is not \'{e}tale for any $\mu_{1},\mu_2$ and $\mu_{3}$.
\end{lem}
\begin{proof}
If $\dim\mu_1=1$, then the generic isotropy subalgebra of the action of $\fsl(2m)\oplus\fgl(1)^k$ on $\Lambda_2$ is $\fsp(m)\oplus\fgl(1)^{k-1}$, whose action on $\Lambda_1\otimes\mu_{2}+\Lambda_1^{\ast}\otimes\mu_3$ is not \'{e}tale (see Corollary~\ref{cor-sp-nonpv}).

If $\dim\mu_1>1$, by {\cite[P.~94, Proposition 12]{SK1977}}, we see that the triplet $(\fsl(2m)\oplus \fgl(1)^{2},\Lambda_2\otimes\mu_1,V(m(2m-1))\otimes V(2))$ is not a $\PV$ for $m\geq4$. For $m=3$, the generic isotropy subalgebra of the triplet
$(\fsl(6)\oplus \fgl(2),\Lambda_2\otimes\Lambda_1,V(15)\otimes V(2))$ contained in $\fsl(6)$ (see Propositions 12 in Pages 92--94 \cite{SK1977}), which implies that $V(15)\otimes V(2)$ is not a $\PV$ for $\fsl(6)\oplus\fgl(1)^k$ since the generic isotropy subalgebra remains the same which the dimension of the Lie algebra is smaller.
For $m=2$, since $(\fsl(4)\oplus \fgl(2), \Lambda_{2}\otimes \Lambda_{1},V(6)\otimes V(2))\simeq(\fso(6)\oplus \fgl(2),\Lambda_1\otimes\Lambda_1,V(6)\otimes V(2))$. By
Lemma~\ref{SO}, we see that the triplet
$(\fsl(4)\oplus \fgl(1)^{2}, \Lambda_{2}\otimes \mu_{1},V(6)\otimes V(2))$ is not a $\PV$.
\end{proof}

For $n=2m+1$, $\Lambda_2$ is not regular for $\fgl(2m+1)$, which makes our discussion more complicated.
\begin{lem}\label{SL(2m+1)}
The triplet $(\fsl(2m+1)\oplus \fgl(1)^{k},\Lambda_{2}\otimes\mu_{1}+\Lambda_{1}\otimes\mu_{2}+
\Lambda_{1}^{\ast}\otimes\mu_{3},V(m(2m+1))\otimes V(d_1)+V(2m+1)\otimes V(d_2)+V(2m+1)\otimes V(d_3))$ cannot be an \'{e}tale $\PV$ for any $\mu_{i}$, $i=1,2,3$.
\end{lem}
\begin{proof} If $d_1\geq 3$, then, by passing to the quotient, we just need to consider the case when $d_1=3$. It is easy to see that $\Lambda_2\otimes\mu_1$ is not a $\PV$, since $\dim\mu_1(\fgl(1)^k)\leq 3$ and $\deg \Lambda_2\otimes\mu_1=3m(2m+1)>\dim\fsl(2m+1)+3$. Therefore $d_1\leq 2$.

If $d_1=2$, then we have
$$\dim\fsl(2m+1)+k=2m(2m+1)+(d_2+d_3)(2m+1),$$
i.e., $2m+k=(d_2+d_3)(2m+1)$. If $d_2+d_3>1$, we may consider the cases when $d_2=2$ and $d_3=0$ or $d_2=d_3=1$. In any case, we have $\dim\mu_2(\fgl(1)^k)+\dim\mu_3(\fgl(1)^k)\leq 2$ and can easily see that such representations are not $\PV$s. Therefore, $d_2+d_3=1$ and $k=1$. Then following \cite[Proposition~13,~P.94]{SK1977}, we identify $V(m(2m+1))\otimes V(2)$ with $\{(X_{1},X_{2})\in M(2m+1)\oplus M(2m+1)|X_{1}^{T}=-X_{1},X_{2}^{T}=-X_{2}\}$, which is a $\PV$ under the natural action of $\fgl(2m+1)$, and for
 \[X_{0}=\left\{\left(
                  \begin{array}{ccc}
                     &  & I_{m} \\
                     & 0 &  \\
                    -I_{m} &  &  \\
                  \end{array}
                \right)
,\left(
   \begin{array}{ccc}
     0 & 0 & 0 \\
     0 & 0 & -I_{m} \\
     0 & I_{m} & 0 \\
   \end{array}
 \right)
 \right\},\]
 the generic isotropy subalgebra at $X_0$ is
 $$\g_{X_0}=\left\{\left(\begin{array}{cc}a_0I_{m+1}&0\\B&-a_0I_m \end{array}\right)\left|B=\left(\begin{array}{cccc}a_1&a_2&\cdots&a_{m+1}\\a_2&a_3&\cdots&a_{m+2}\\\vdots&\vdots&&\vdots\\a_m&a_{m+1}&\cdots&a_{2m}
 \end{array}\right)\right.\right\}.$$
 Then it is easy to see that $(\g_{X_0},\Lambda_1^{(*)},V(2m+1))$ is not a $\PV$.

Now we assume that $d_1=1$. By Corollary \ref{cor4.19}, we have $d_2\leq 1$ or $d_3\leq 1$.

(i) $d_2=0$. If $(\fsl(2m+1)\oplus \fgl(1)^{k},\Lambda_2\otimes\mu_1 +\Lambda^{\ast}_1\otimes\mu_{3},V(m(2m+1))+ V(2m+1)\otimes V(d_{3}))$
is an \'{e}tale $\PV$, then by~\cite[P.~446, Proposition 1.33]{KKTI1988}, it is $\PV$-equivalent to $(\fsp(m)\oplus\fsl(2m+1)\oplus \fgl(1)^{k},\Lambda_1\otimes\Lambda_1\otimes\mu_1 +1\otimes\Lambda^{\ast}_1\otimes\mu_{3},V(2m)\otimes V(2m+1)+ V(2m+1)\otimes V(d_{3}))$. Then we have
$$m(2m+1)+(2m+1)^2-1+k=2m(2m+1)+(2m+1)d_3,$$
from which we deduce that $d_3\geq m+1$. On the other hand, by Theorem~\ref{saver}, one has that $(\fsp(m)\oplus \fgl(1)^{k},\Lambda_1\otimes\mu_{3},V(2m)\otimes V(d_{3}))$ is a $\PV$, which implies that $m(2m+1)+k\geq 2md_3$. Therefore $d_3<m+\frac{1}{2}$. A contradiction.

(ii) $d_3=0$. If $(\fsl(2m+1)\oplus \fgl(1)^{k},\Lambda_2\otimes\mu_1 +\Lambda_1\otimes\mu_{2},V(m(2m+1))+ V(2m+1)\otimes V(d_{2}))$
is an \'{e}tale $\PV$, then by~\cite[P.~446, Proposition 1.33]{KKTI1988}, it is $\PV$-equivalent to $(\fsp(m)\oplus\fsl(2m+1)\oplus \fgl(1)^{k},\Lambda_1\otimes\Lambda_1\otimes\mu_1 +1\otimes\Lambda_1\otimes\mu_{2},V(2m)\otimes V(2m+1)+ V(2m+1)\otimes V(d_{2}))$, which is castling-equivalent to
$(\fsp(m)\oplus\fsl(d_{2}-1)\oplus \fgl(1)^{k},\Lambda_1\otimes\Lambda_1\otimes\mu_1^{\ast} +1\otimes\Lambda_1\otimes\mu_{2}^{\ast},V(2m)\otimes V(d_{2}-1)+ V(d_{2}-1)\otimes V(d_{2}))$. It follows that $(\fsl(d_{2}-1)\oplus \fgl(1)^{k},\Lambda_2\otimes\mu_1^* +\Lambda_1\otimes\mu_{2}^{\ast})$ is an \'{e}tale $\PV$. A contradiction.

(iii) $d_2=1$. The generic isotropy subgroup of $(\fsl(2m+1)\oplus\fgl(1)^{2})$ acting on $\Lambda_{2}\otimes\Box+\Lambda_{1}\otimes\Box$ is $\fsp(m)\oplus\fgl(1)$,
hence the triplet $(\fsl(2m+1)\oplus \fgl(1)^{k},\Lambda_{2}\otimes\mu_{1}+\Lambda_{1}\otimes\mu_{2}+
\Lambda_{1}^{\ast}\otimes\mu_{3},V(m(2m+1))+V(2m+1)+V(2m+1)\otimes V(d_3))$ is not an \'{e}tale $\PV$ by Corollary~\ref{cor-sp-nonpv}.

(iv) $d_3=1$. If the triplet
$$(\fsl(2m+1)\oplus \fgl(1)^{k},\Lambda_{2}\otimes\mu_{1}+\Lambda_{1}\otimes\mu_{2}+
\Lambda_{1}^{\ast}\otimes\mu_{3},V(m(2m+1))+V(2m+1)\otimes V(d_2)+V(2m+1))$$ is an \'{e}tale $\PV$, then $d_2\geq m$. But we will show that the above triplet is not a $\PV$ even  for $d_2=2$.

Note that the generic isotropy subalgebra of $(\fgl(2m+1), \Lambda_{2},V(m(2m+1)))$ is
\[\fh=\left\{\left.\left(
      \begin{array}{cc}
        A & \beta \\
        0 & c \\
      \end{array}
    \right)\in\fgl(2m+1)\right|A\in\fsp(m)\right\}.\]
Consider the triplet $(\fgl(1)\oplus\fh\oplus\fgl(2),\Box\otimes\Lambda_{1}^{\ast}\otimes1+1\otimes\Lambda_{1}\otimes\Lambda_{1})$, whose isotropy subalgebra of at \[X_{1}=\left\{(1,0,\cdots,0)^{T},\left(
                              \begin{array}{ccccc}
                                1 & 0 & \cdots & 0 & 0 \\
                                0 & 0 & \cdots & 0 & 1 \\
                              \end{array}
                            \right)^{T}
\right\}\] is given by
\[\left.\left\{c\oplus\left(
           \begin{array}{ccccc}
             -c & 0 & 0 & 0 & 0 \\
             0 & D & 0 & E & 0 \\
             0 & 0 & c & 0 & 0 \\
             0 & F & 0 & -D^{T} & 0 \\
             0 & 0 & 0 & 0 & -t_{22} \\
           \end{array}
         \right)\oplus \left(
                         \begin{array}{cc}
                           c & 0 \\
                           0 & t_{22} \\
                         \end{array}
                       \right)\right|\left(\begin{array}{cc}D&E\\F&-D^T\end{array}\right)\in\fsp(m-1)\right\},
\]
which implies that $(\fsl(2m+1)\oplus \fgl(1)^{4}, \Lambda_{2}\otimes\Box+\Lambda_{1}\otimes\mu_{2}+
\Lambda_{1}^{\ast}\otimes\Box, V(m(2m+1))+V(2m+1)\otimes V(2)+V(2m+1))$ is not a $\PV$.
\end{proof}

Now that only $\Lambda_1^{(*)}$ can occur in \'{e}tale $\PV$s for $\fsl(n)\oplus\fgl(1)^k$, we just need to consider the triplet
\begin{equation}\label{eq-triplet-lambda1}
(\fsl(n)\oplus \fgl(1)^{k},\Lambda_1\otimes\mu_{1}+ \Lambda_1^{\ast}\otimes\mu_{2},V(n)\otimes V(m_{1})\oplus V(n)\otimes V(m_{2})),\ \ (n\geq 3).
\end{equation}
Since $\Lambda_1$ and $\Lambda_1^*$ are equivalent $\PV$s, we may assume that $m_1\geq m_2$. 
In the following we investigate the case of $m_2=0$.

\begin{lem}\label{lemIn}
Let the triplet $(\fsl(n)\oplus \fgl(1)^{k},\Lambda_{1}^{(\ast)}\otimes\mu,V(n)\otimes V(m))$ be an \'{e}tale $\PV$. Then $m\geq n$ and $k=n(m-n)+1$. Furthermore, identifying $V(n)\otimes V(m)$ with $M(n,m)$, we have the \'{e}tale $\PV$ has $(I_n\ 0)$ as a generic point and $\mu(\fgl(1)^k)$ consists of lower triangular matrices, up to an isomorphism of representations.
%(2) For any $Q\in\GL(m)$, set $\mu_Q(A)=Q\mu(A)Q^{-1}$ for any $A\in\fgl(1)^k$, which is a representation of $\fgl(1)^k$. Then
%$$(\fsl(n)\times \fgl(1)^{k},\Lambda_{1}^{(\ast)}\otimes\mu_Q,V(n)\otimes V(m))$$
%is an \'{e}tale $\PV$. Identifying $V(n)\otimes V(m)$ with $M(n,m)$, we have that there exists $Q$ such that this \'{e}tale $\PV$ has $(I_n\ 0)$ as a generic point and $\mu_Q(A)$ are lower triangular for all $A\in\fgl(1)^k$.
\end{lem}
\begin{proof}%We just prove the case of $\Lambda_1$, and the proof for $\Lambda_1^*$ is similar.
The first assertion is trivial. For the second one, since $\fgl(1)^k$ is abelian, $\mu(\fgl(1)^k)$ is a $k$-dimensional abelian subalgebra of $\fgl(m)$, which may be assumed to be lower triangular. Identify $V(n)\otimes V(m)$ with $M(n,m)$ and let $X_0=(X_1\ X_2)\in M(n,m)$ be a generic point, where $X_1\in M(n,n)$. Since the orbit of $X_0$ is open and dense, we have $(X_1+t I_n\ X_2)$ is also a generic point for sufficient small $t$. Choosing $t$ such that $|X_1+tI_n|=1$, we have $Y_0=(I_n\ X_3)$ is generic for $X_3=(X_1+tI_n)^{-1}X_2$. Then there exists a lower triangular matrix $Q\in\GL(m)$ such that $Y_0 Q^T=(I_n\ 0)$. Then $(I_n\ 0)$ is a generic point for $\Lambda_1^{(*)}\otimes\mu_Q$ and $\mu_Q(A)=Q\mu(A)Q^{-1}$ is lower triangular for all $A\in\fgl(1)^k$.
\end{proof}

Now for the \'{e}tale $\PV$ in the previous Lemma, we may assume that $(I_n\ 0)$ is always a generic point and $\mu(\fgl(1)^{k})$ is a $k$-dimensional abelian subalgebra of $\fgl(m)$ consisting of lower triangular matrices.

\begin{lem}\label{lemCij} $\mu(\fgl(1)^k)=\Span\{I_m,C_{ij}\}$, where $C_{ij}=\left(\begin{array}{cc}A_{ij}&0\\E_{ij}&0\end{array}\right)$ with $A_{ij}$ lower triangular and $\tr(A_{ij})=0$. Furthermore, if $m-n>1$, then $A_{ij}=0$.
\end{lem}
\begin{proof}
Since $(I_n\ 0)$ is generic, there exists a unique element $(X,Y)\in\fsl(n)\oplus\mu(\fgl(1)^k)$ such that
$$(X,Y)\cdot(I_n\ 0)=X(I_n\ 0)+(I_n\ 0)Y^T=(I_n\ 0).$$
Set $Y=\left(\begin{array}{cc}Y_1&0\\Y_2&Y_3\end{array}\right)$. We have
\begin{equation}\label{eq-identity}
X+Y_1^{T}=I_n,\ \ \ Y_2=0.
\end{equation}
Let $V=\left\{\left.\left(\begin{array}{cc}B&0\\C&D\end{array}\right)\in\mu(\fgl(1)^k)\right|B\in\fsl(n)\right\}$. Then
$$\mu(\fgl(1)^k)=\Span\{Y\}\dot+V,$$
since $\tr(Y_1)=n$. For any nonzero $A=\left(\begin{array}{cc}B&0\\C&D\end{array}\right)\in V$, since $B\in\fsl(n)$, we have
$$(-B^T,A)\cdot(I_n\ 0)=(0\ C^T),$$
from which we deduce $C\neq 0$. Thus there exists a basis $C_{ij}$ of $V$ with the form
$$C_{ij}=\left(\begin{array}{cc}A_{ij}&0\\E_{ij}&D_{ij}\end{array}\right),$$
where $E_{ij}$ is the $(m-n)\times n$ matrix with 1 in the $(i,j)$-position and zero elsewhere.

Since $[C_{ij},Y]=0$, we have $E_{ij}Y_1=Y_3 E_{ij}$, $\forall i,j$, from which and (\ref{eq-identity}) one can easily show that $Y=I_m$.

Since $[C_{ij},C_{kl}]=0$, we have
$$E_{ij}A_{kl}+D_{ij}E_{kl}=E_{kl}A_{ij}+D_{kl}E_{ij},$$
which implies $A_{ij}=A_{kl}=0$ and $D_{ij}=D_{kl}=0$ if $m-n>1$. If $m-n=1$, the assertion is trivial since one may replace $C_{ij}$ by $C_{ij}=C_{ij}-D_{ij}Y$.
\end{proof}

Since there is always an element in $\fgl(1)^k$ acting as the identity, we may write $\g=\fsl(n)\oplus\fgl(1)^{mn-n^2+1}$ as $\fgl(n)\oplus\fgl(1)^{mn-n^2}$, where $\fgl(1)^{mn-n^2}$ is the commutative subalgebra of $\fgl(m)$ spanned by $C_{ij}$.

\begin{lem}\label{abelian2}
Let $\g=\fgl(n)\oplus \fgl(1)^{mn-n^{2}}$ be a left-symmetric algebra corresponding to an \'{e}tale prehomogeneous vector space $(\fgl(n)\oplus \fgl(1)^{mn-n^{2}},\Lambda_{1}^{(\ast)}\otimes\mu_{1},V(n)\otimes V(m))$ with $m-n>1$.
%Then $\g$ is an associative algebra isomorphic to the matrix algebra
%$$\left\{\left.\left(\begin{array}{cc}A&0\\C&0\end{array}\right)\in\fgl(m)\right|A\in\fgl(n),C\in M(m-n,n)\right\}.$$
Then, as left-symmetric algebras, $\g\cong\fgl(n)\dot+\fgl(1)^{mn-n^2}$, where $\fgl(n)$ is a subalgebra with the usual matrix product and $\fgl(1)^{mn-n^{2}}$ is a commutative ideal with trivial product.
\end{lem}
\begin{proof}
Without loss of generality, we assume that $X_{0}=(I_{n},0)$ is a generic point. Then for $A,B\in \fgl(n)$, we have
\begin{align*}
% \nonumber to remove numbering (before each equation)
A\cdot(B\cdot X_0)&=(AB)\cdot X_0;\\
A\cdot(C_{ij}\cdot X_{0})&= A(I_{n},0)\left(
                            \begin{array}{cc}
                              0 & E_{ji} \\
                              0 & 0 \\
                            \end{array}
                          \right)=(0,AE_{ji})=\left(
                                                         \begin{array}{cc}
                                                           0 & 0\\
                                                           E_{ij}A^T & 0 \\
                                                         \end{array}
                                                       \right)\cdot X_0, \\
C_{ij}\cdot(C_{st}\cdot X_{0})&= (I_{n},0)\left(
                            \begin{array}{cc}
                              0 & E_{ts} \\
                              0 & 0 \\
                            \end{array}
                          \right)\left(
                                   \begin{array}{cc}
                                     0 & E_{ji} \\
                                     0 & 0 \\
                                   \end{array}
                                 \right)
                          =0 .
\end{align*}
Denote by $*$ the left-symmetric product on $\g$. Then we have $A*B=AB$, $C_{ij}*C_{st}=0$ and $A*C_{ij}\in\fgl(1)^{mn-n^2}$, which implies that $\fgl(1)^{nm-n^{2}}$ is an ideal of $\g$ and $\fgl(n)$ is a subalgebra with the usual matrix product.
\end{proof}

\begin{rem}
This case may be illustrated as follows:
\[\fgl(n)\oplus \fgl(1)^{mn-n^{2}}\cong \fg'=\left\{\left(
                                                \begin{array}{cc}
                                                  A & 0 \\
                                                  C & 0 \\
                                                \end{array}
                                              \right)|A\in\fgl(n),C\in M(m-n,n)
\right\}. \]
For any $x,y\in\fg$, we have $x\cong\left(
                                      \begin{array}{cc}
                                        A & 0 \\
                                        C & 0 \\
                                      \end{array}
                                    \right),y\cong\left(
                                                    \begin{array}{cc}
                                                      B & 0 \\
                                                      D & 0 \\
                                                    \end{array}
                                                  \right)
$ and $x\ast y=\left(
                 \begin{array}{cc}
                   AB & 0 \\
                   CB^T+DA^T & 0 \\
                 \end{array}
               \right).
$
\end{rem}
Now we study the case of $m_2\geq 1$.

\begin{lem}\label{SL11}
Let the triplet
$$(\fsl(n)\oplus \fgl(1)^{k},\Lambda_1\otimes\mu_{1}+ \Lambda_1^{\ast}\otimes\mu_{2},V(n)\otimes V(m_{1})\oplus V(n)\otimes V(m_{2})),\ \ (n\geq 3)$$
be an \'{e}tale $\PV$ with $m_{1}\geq m_{2}\geq1$. Then we have

$(1) \ m_{2}=1;$

$(2) \ m_1=n$ and $k=n+1;$

$(3)$ we may write $\fsl(n)\oplus\fgl(1)^{n+1}$ as $\fgl(n)\oplus\fgl(1)^n$, where
$$\mu_{1}(\fgl(1)^{n})=\Span\left\{I_{n},\left(
                                               \begin{array}{cc}
                                                 0 & 0 \\
                                                 E_{i1} & C_{i} \\
                                               \end{array}
                                             \right),i=2,\ldots,n
\right\},$$
where $C_{2},\ldots,C_{n}\in\fgl(n-1)$ are lower triangular matrices.
\end{lem}
\begin{proof}
(1) follows from Corollary \ref{cor4.19}.

(2) Since $k\geq 2$, it is easy to see that $m_1\geq n$ and $V(n)\otimes V(m_1)$ is $\fsl(n)\oplus\fgl(1)^k$-prehomogeneous. It is easy to see that any generic isotropy subalgebra for $V(n)\otimes V(m_1)$ intersects $\fsl(n)$ trivially. We may choose subalgebra $\fgl(1)^r$ of $\fgl(1)^k$ such that $V(n)\otimes V(m_1)$ is \'{e}tale for $\fsl(n)\oplus\fgl(1)^r$. If $r>1$, then, by Lemma~\ref{lemCij}, it is easy to check that $\mu_1(\fgl(1)^r)$ is a maximal abelian subalgebra which doesnot contain in any other abelian subalgebra and $r=(m_1-n)n+1$. Noticing that $k=(m_1+1-n)n+1$ and $\dim\mu_2(\fgl(1)^k)\leq 1$, we have $\dim\mu_1(\fgl(1)^k)\geq k-1=(m_1+1-n)n$. Since $\mu_1(\fgl(1)^k)\subseteq\mu_1(\fgl(1)^r)$, it is a contradiction.

(3) Similar to the proof of Lemma~\ref{lemCij}.
\end{proof}

Combining all the results we obtained above, we have

\begin{thm}\label{s+c^{k}}
Let $(\g_{s}\oplus \fgl(1)^{k},\sum_{i}\sigma_{i}\otimes\mu_{i},\sum_{i}V_{i}\otimes V'_{i})$ be an \'{e}tale $\PV$, where $\g_{s}$ is simple. Then it is equivalent to one of the following triplets:

$(1) \ (\fsl(n)\oplus\fgl(1),\Lambda_{1}\otimes\mu_{1},V(n)\otimes V(n))\ (\tr(\mu_{1}(\fgl(1)))\not\equiv 0);$

$(2) \ (\fsl(2)\oplus\fgl(1),3\Lambda_{1}\otimes\Box,V(4));$

$(3) \ (\fsl(n)\oplus\fgl(1)^{n+1},\Lambda_{1}^{(\ast)}\otimes\mu_{1},V(n)\otimes V(n+1)),$ \ (for $\mu_{1}$, see Lemma~\ref{lemCij})$;$

$(4) \ (\fgl(n)\oplus\fgl(1)^{mn-n^2},\Lambda_{1}^{(\ast)}\otimes\mu_{1},V(n)\otimes V(m)),(m-n>1)$ \ (for $\mu_{1},$ see Lemma~\ref{abelian2})$;$

$(5) \ (\fsl(n)\oplus\fgl(1)^{n+1},\Lambda_{1}\otimes\mu_{1}+\Lambda_{1}^{\ast}\otimes\Box,V(n)+V(n)\otimes V(n)),$ \ (for $\mu_{1},$ see Lemma~\ref{SL11})$;$

$(6) \ (\fsl(2)\oplus\fgl(1)^{2},2\Lambda_1\otimes\Box+\Lambda_1\otimes\Box,V(3)+ V(2)).$
\end{thm}

\section*{Acknowledgements} This work was partially supported by National Natural Science Foundation of China (11571182). The authors would like to thank Professor J. A. Wolf at Berkeley for helpful discussion.

\end{document}